
\documentclass[12pt,reqno]{amsart}


\usepackage{amsfonts}
\usepackage{amsmath}
\usepackage{amssymb}
\usepackage{amsthm}
\usepackage{bbm}
\usepackage{bm}
\usepackage{datetime}
\usepackage{dsfont}
\usepackage{enumitem}
\usepackage{esvect}
\usepackage{extarrows}
\usepackage{fancyhdr}
\usepackage{float}
\usepackage[T1]{fontenc} 
\usepackage[top=3cm, bottom=2.5cm, left=2.5cm, right=2.5cm]{geometry}
\usepackage{graphicx}
\usepackage[bookmarks,breaklinks,colorlinks=true,linkcolor=blue,citecolor=blue,urlcolor=blue,menucolor=blue,pagebackref=true,linktoc=all,pdftex]{hyperref}
\usepackage{mathrsfs}
\usepackage{mathtools}
\usepackage{physics}
\usepackage{relsize}
\usepackage{thmtools}
\usepackage{tikz}
\usepackage[normalem]{ulem}
\usepackage{url}
\usepackage{xcolor}

\graphicspath{{Images/}}


\makeatletter
\def\th@plain{%
    \thm@notefont{}
    \itshape
}
\def\th@definition{%
    \thm@notefont{}
    \normalfont
}
\makeatother

\newtheorem{theorem}{Theorem}[section]
\newtheorem{lemma}[theorem]{Lemma}
\newtheorem{proposition}[theorem]{Proposition}
\newtheorem{corollary}[theorem]{Corollary}

\theoremstyle{definition}

\theoremstyle{remark}
\newtheorem{remark}[theorem]{Remark}


\theoremstyle{theorem}

\theoremstyle{remark}


\newcommand{\mychi}{\raisebox{2pt}{$\chi$}}

\newcommand{\GL}{\mathrm{GL}}

\newcommand{\SO}{\mathrm{SO}}

\renewcommand{\d}{\dd}

\DeclareMathOperator*{\vol}{vol}


\numberwithin{equation}{section}




\begin{document}

\title{Quantitative rational approximation on spheres}

\author{Mahbub Alam}

\address{\textbf{Mahbub Alam} \\
    School of Mathematics,
    Tata Institute of Fundamental Research, Mumbai, India 400005}
\email{mahbub.dta@gmail.com}

\author{Anish Ghosh}

\address{\textbf{Anish Ghosh} \\
    School of Mathematics,
    Tata Institute of Fundamental Research, Mumbai, India 400005}
\email{ghosh@math.tifr.res.in}

\thanks{AG was supported by a Government of India, Department of Science and Technology, Swarnajayanti fellowship DST/SJF/MSA-01/2016--17, a CEFIPRA grant and a Matrics grant.
    MA and AG acknowledge support of the Department of Atomic Energy, Government of India, under project $12-R\&D-TFR-5.01-0500$.
    This work received support from a grant from the Infosys foundation.}

\date{}


\begin{abstract}
    We prove a quantitative theorem for Diophantine approximation by rational points on spheres.
    Our results are valid for arbitrary unimodular lattices and we further prove `spiraling' results for the direction of approximates.
    These results are quantitative generalizations of the Khintchine-type theorem on spheres proved in~\cite{kleinbockmerrill15}.
\end{abstract}

\maketitle

\section{Introduction}\label{}
In this paper we prove quantitative results in intrinsic Diophantine approximation on spheres.
Intrinsic Diophantine approximation refers to a family of problems where one considers an algebraic variety $X$ with a dense set of rational points, and studies metric Diophantine approximation on $X$ with the aim of establishing analogues of the classical results in the theory.
This subject has some vintage, having been considered by Lang~\cite{Lang65}, and has seen considerable activity recently.
We refer the reader to~\cite{GGN13, GGN14, GK17} and~\cite{GGN20} for results on metric Diophantine approximation on homogeneous varieties of semisimple groups, to~\cite{kleinbockmerrill15, KS18, kleinmosh19, mosh16, sardari19, SW19, BGSV18} for results on spheres, and to~\cite{FKMS14, FMS18, PR21} for results on quadratic surfaces.
Intrinsic Diophantine approximation on spheres has received particular attention, both for its own sake and for connections to quantum gates as pointed out by Sarnak~\cite{Sarnak15} and to computer science~\cite{BahrdtSeybold17}.
Despite recent progress, there remain many open questions.
In particular, \emph{quantitative} Diophantine approximation on varieties poses a significant challenge.

Throughout this paper, all norms will represent Euclidean norms, rational points on the unit sphere $\mathbb{S}^n \subseteq \mathbb{R}^{n+1}$ centered at the origin will be represented as $\frac{\bm{p}}{q}$, where $q \in \mathbb{Z}_+$ and $\bm{p} \in \mathbb{Z}^{n+1}$ such that $\norm{\bm{p}} = q$ and $(\bm{p}, q) \in \mathbb{Z}^{n+2}$ is primitive.
We will denote by $\mathbb{Z}^{n+2}_{\mathrm{pr}}$ the set of primitive vectors in $\mathbb{Z}^{n+2}$.
Finally, elements of Euclidean spaces will always be thought of as column vectors even though we will write them as row vectors.
With these notational niceties taken care of, we can now recall a result of Kleinbock and Merrill.
\begin{theorem}[\cite{kleinbockmerrill15}, Theorem 1.1]\label{thmkmdioonsn}
    There exists a constant $c_n > 0$ such that for all $\alpha \in \mathbb{S}^n$ there exist infinitely many rationals $\frac{\bm{p}}{q} \in \mathbb{S}^n$ such that
    \[
        \norm{\alpha - \frac{\bm{p}}{q}} < \frac{c_n}{q}.
    \]
\end{theorem}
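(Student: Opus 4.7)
The plan is to translate the problem into a statement about lattices. Identify $S^n$ with the projectivization of the null cone of the Lorentzian form $Q(v) = v_1^2 + \cdots + v_{n+1}^2 - v_{n+2}^2$ on $\mathbb{R}^{n+2}$. A rational point $\bm{p}/q \in S^n$ (with $\|\bm{p}\| = q$) then corresponds to an integer null vector $(\bm{p}, q) \in \mathbb{Z}^{n+2}$, and the inequality $\|\alpha - \bm{p}/q\| < c_n/q$ is equivalent to the Euclidean-norm bound $\|q\alpha - \bm{p}\| < c_n$, that is, to the integer null vector $(\bm{p}, q)$ lying within bounded distance of the null ray through $\tilde\alpha := (\alpha, 1)$.

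Set $G = \SO(Q)^\circ(\mathbb{R})$ and $\Gamma = \SO(Q)(\mathbb{Z})$, and choose $g_\alpha \in G$ sending a fixed base null vector to a scalar multiple of $\tilde\alpha$. Let $a_t \in G$ be the one-parameter geodesic subgroup contracting the $\tilde\alpha$-direction by $e^{-t}$, expanding a complementary null direction by $e^t$, and acting trivially on the Euclidean transverse. For the unimodular lattice $L_t := a_t g_\alpha \mathbb{Z}^{n+2}$, a routine computation shows that a null vector of Euclidean norm $\leq c$ in $L_t$ corresponds, after undoing the $a_t$-flow, precisely to an integer null vector $(\bm{p}, q) \in \mathbb{Z}^{n+2}$ with $q \asymp e^t$ and $\|q\alpha - \bm{p}\| \lesssim c$. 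Hence producing approximations reduces to finding, at every $t \geq 0$, a short null vector in $L_t$.

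The key step, which I expect to be the main obstacle, is to prove a null-vector analogue of Minkowski's theorem: there exists $c_n > 0$ depending only on $n$ such that every lattice in the $G$-orbit of $\mathbb{Z}^{n+2}$ contains a non-zero null vector of Euclidean norm at most $c_n$. This should be established by the reduction theory of $\Gamma$. The quotient $\Gamma \backslash G$ has finite volume and decomposes into a compact core together with finitely many horoball neighborhoods of cusps; each cusp corresponds to a $\Gamma$-orbit of primitive integer null vectors, and depth in a cusp is quantitatively equivalent to the existence of a very short null vector in the lattice, so in the cusps the bound is automatic. On the compact core, a uniform bound on the shortest null vector follows by continuity and compactness. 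Applying this bound to $L_t$ for every $t \geq 0$ yields the desired integer null vector $(\bm{p}_t, q_t)$, and infinitely many distinct approximations follow by sending $t \to \infty$, since $q_t \asymp e^t$ is unbounded along any sequence tending to infinity.
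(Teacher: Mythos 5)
The paper does not prove Theorem~\ref{thmkmdioonsn}; it is cited from~\cite{kleinbockmerrill15} and used as motivation. Your proposal, however, does reconstruct the Kleinbock--Merrill argument: the passage to the light cone, the interpretation of the inequality $\norm{\alpha - \bm{p}/q} < c/q$ as $\norm{q\alpha - \bm{p}} < c$, the use of the geodesic flow $g_t$ to rescale, and above all the key lemma --- that there is a constant $c_n$ such that \emph{every} lattice $g\Lambda_0$ in the $G$-orbit contains a nonzero null vector of Euclidean norm at most $c_n$ --- are precisely the ingredients of the proof in~\cite{kleinbockmerrill15}. Your justification of the key lemma is sound: the function $\Lambda \mapsto \min\{\norm{\bm{v}} : \bm{v} \in \Lambda \smallsetminus \{0\}\}$ is continuous on $X = G/\Gamma$; by Mahler's criterion (note that any sufficiently short vector of $g\mathbb{Z}^{n+2}$ is automatically null, because $\mathcal{Q}$ is integer-valued on $\mathbb{Z}^{n+2}$ and $G$-invariant) the sublevel sets $\{\Lambda : m(\Lambda) \geq \epsilon\}$ are compact, and on the complementary cusp neighborhoods $m < \epsilon$ by construction, so $m$ is bounded on $X$. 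This is exactly the reduction-theoretic argument you describe.

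Two small imprecisions worth tightening. First, the assertion $q \asymp e^t$ overstates what the computation gives: from a null vector of $g_t k_\alpha \Lambda_0$ of norm $\leq c_n$ one only obtains $q \lesssim e^t$ (the denominator could be much smaller). The correct way to get infinitely many solutions is to observe that for irrational $\alpha$ no single pair $(\bm{p}_0, q_0)$ can remain a witness for all large $t$, since $g_t$ expands the null direction transverse to $\bm{e}_1$ and so $\norm{g_t k_\alpha(\bm{p}_0, q_0)} \to \infty$ unless $\bm{p}_0/q_0 = \alpha$; hence the witnessing pairs change, and because $q \lesssim e^t$ they are forced to eventually have arbitrarily large denominators, giving infinitely many distinct rationals converging to $\alpha$. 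Second, your rational case ($\alpha \in \mathbb{Q}^{n+1} \cap S^n$) should be acknowledged separately (it is trivial). It is also worth noting that the present paper's own machinery (Siegel transform and Birkhoff's ergodic theorem, Sections 4--5) gives a quantitative strengthening but only for \emph{almost every} $\alpha$, and so does not recover the ``for all $\alpha$'' statement of this theorem; the Minkowski/reduction argument you give is essential to obtain the everywhere result.
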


This result can be viewed as an analogue for spheres, of Dirichlet's classical theorem in Diophantine approximation.
We will prove a quantitative version of this result, in the sense that we give an asymptotic count (as $T \to \infty$) for the number of solutions to the above inequality with $q$ restricted between 1 and $\cosh T$.
For $\alpha \in \mathbb{S}^n$ and $T, c > 0$ set $N_{T, c}(\alpha)$ to be the number of solutions $(\bm{p}, q) \in \mathbb{Z}^{n+2}_{\mathrm{pr}}$ with $\frac{\bm{p}}{q} \in \mathbb{S}^n$ to
\begin{equation*}\label{}
    \begin{split}
        \norm{\alpha - \frac{\bm{p}}{q}} < \frac{c}{q}, \\
        1 \leq q < \cosh T.
    \end{split}
\end{equation*}
We prove that
\begin{theorem}\label{thmintropaper}
    There exists a computable constant $\eta(c) > 0$ (\S\ref{ssecFTcvol}), depending only on $c$, such that for a.e.\ $\alpha \in \mathbb{S}^n$,
    \[
        \frac{N_{T, c}(\alpha)}{T} \to \eta(c) \ \text{as} \ T \to \infty.
    \]
\end{theorem}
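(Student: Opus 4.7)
The plan is to reduce the counting $N_{T,c}(\alpha)$ to a Birkhoff-type ergodic average for a geodesic flow on a homogeneous space, and then to transfer the resulting a.e.\ statement from Haar measure on that homogeneous space to Lebesgue measure on $S^n$. Let $G = \mathrm{SO}_0(n+1,1)$ act on $(\mathbb{R}^{n+2}, Q)$ with $Q(\bm x, y) = \norm{\bm x}^2 - y^2$, set $\Gamma = G \cap \mathrm{SL}_{n+2}(\mathbb{Z})$, and let $X := G/\Gamma$ with its finite Haar measure $\mu$. Integer pairs $(\bm p, q) \in \mathbb{Z}^{n+2}$ with $\norm{\bm p} = q > 0$ are exactly the integer vectors on the positive light cone $C^+ = \{Q = 0,\ y > 0\}$, and $\Gamma$ acts on these with finitely many orbits. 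Given $\alpha$, fix $k_\alpha \in K := \mathrm{SO}(n+1) \subset G$ with $k_\alpha e_1 = \alpha$, and set $\Lambda_\alpha := k_\alpha^{-1}\Gamma \in X$; let $g_t$ denote the fixed one-parameter subgroup of $G$ that is diagonal in the null basis at $e_1$.

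After the rotation $k_\alpha^{-1}$ aligns $\alpha$ with $e_1$, decompose each cone vector in the null basis $e_\pm = (\pm e_1, 1)/\sqrt{2}$, writing $\bm v = v^+ e_+ + v^- e_- + w$. A short computation gives $v^+ \asymp q$, $\norm{w} \asymp q\norm{\alpha - \bm p/q}$, and $v^- = -\norm{w}^2/(2v^+)$, while $g_t$ scales $v^\pm$ by $e^{\pm t}$ and fixes $w$. Thus the approximation condition $\norm{\alpha - \bm p/q} < c/q$ becomes the $g_t$-invariant condition $\norm{w} < c$, and the height bound $q < \cosh T$ becomes $v^+ < e^T$ up to $O(1)$. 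Set
\[
\Omega_c := \{\bm v \in C^+ : 1 \le v^+ < e,\ \norm{w} < c\}, \qquad F(\Lambda) := \#(\Lambda \cap C^+ \cap \Omega_c).
\]
Every integer cone vector contributing to $N_{T,c}(\alpha)$ is translated into $\Omega_c$ by a unique $g_{-t}$ with $t \in [0,T)$, so
\[
N_{T,c}(\alpha) = \int_0^T F(g_t \Lambda_\alpha)\, dt + O(1).
\]
A Siegel-type unfolding along the $\Gamma$-orbits in $C^+$ shows $F \in L^1(X, \mu)$ with mean $\eta(c)$ computable from the Haar volume of $\Omega_c$; the Birkhoff pointwise ergodic theorem, applied to the ergodic (in fact mixing) flow $g_t \curvearrowright X$, then yields $\frac{1}{T}\int_0^T F(g_t \Lambda)\, dt \to \eta(c)$ for $\mu$-a.e.\ $\Lambda \in X$.

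The \emph{main obstacle} is that the set $\{\Lambda_\alpha : \alpha \in S^n\} \subset X$ has Haar measure zero, so the Birkhoff conclusion cannot be pulled back directly to $\alpha$-a.e.\ convergence on $S^n$. To overcome this, I would use effective mixing of $g_t$ --- equivalently, effective equidistribution of the expanding translates of this $K$-slice --- to bound the variance $\int_{S^n} \big(\tfrac{N_{T,c}(\alpha)}{T} - \eta(c)\big)^2 \, d\alpha$; combined with a Borel-Cantelli argument along a geometric sequence of times $T_j$, this should upgrade the $L^2$ bound to almost-sure convergence on $S^n$. A secondary technical point is that $F$ is unbounded near the cusp, so one must truncate $F$ via a Margulis-type function and bound the tail contribution of deep cusp excursions separately before the variance argument cleanly applies.
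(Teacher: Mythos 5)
Your reduction of $N_{T,c}(\alpha)$ to a time-average of a Siegel transform along the geodesic flow on $X = G/\Gamma$, the Siegel--Weil formula for the mean, and Birkhoff's theorem for $\mu$-a.e.\ lattice all match what the paper does in \S\ref{secpfsforlats}, and you correctly identify the crux: the lattices $k\Lambda_0$, $k\in K$, form a $\mu$-null $K$-orbit, so the a.e.\ statement on $X$ does not automatically descend to a.e.\ $\alpha\in S^n$. The gap is in how you propose to close this. Effective equidistribution of the expanding translates $g_tK\Lambda_0$ controls only the \emph{first} moment $\int_K\varphi(g_t\Lambda_k)\,d\sigma(k)$, which gives deterministic convergence of the $K$-average but says nothing about a.e.\ convergence in $k$. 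For Chebyshev plus Borel--Cantelli you would need a \emph{variance} bound, i.e.\ decay of $\int_K\widehat{f}(g_s\Lambda_k)\,\widehat{f}(g_t\Lambda_k)\,d\sigma(k)-\big(\int_X\widehat{f}\,d\mu\big)^2$ as $|s-t|\to\infty$, uniformly as $s,t\to\infty$. That is a pair-correlation estimate for the Siegel transform along the $K$-slice; it does not follow from effective equidistribution of the slice, and making it precise would require controlling Sobolev norms of $\widehat{f}\cdot(\widehat{f}\circ g_\tau)$ as $\tau\to\infty$, compounded by the unboundedness of $\widehat{f}$ at the cusp (so one needs a Rogers-type second-moment identity, or a cusp truncation whose tail contribution to the variance must then be bounded separately). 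You flag the truncation but not the second-moment issue, and neither is carried out; this is also far more machinery than the theorem needs.

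The paper closes the descent with a soft argument requiring no spectral input or rate. By the Iwasawa decomposition $G=NAK$ and Fubini (Proposition~\ref{propfubinitype}), for a.e.\ $(u_{\bm{y}},g_s)\in N\times A$ near the identity there is a full-$\sigma$-measure set of $k\in K$ for which $u_{\bm{y}}g_sk\Lambda_0$ is Birkhoff generic. Choosing a sequence $(\bm{y}_\ell,s_\ell)\to(0,0)$ and intersecting the corresponding full-measure sets gives a single full-measure $K_\infty\subseteq K$, and for $k\in K_\infty$ the count along $k\Lambda_0$ is sandwiched between counts along the Birkhoff-generic perturbations $u_{\bm{y}_\ell}g_{s_\ell}k\Lambda_0$ at parameters $c\mp\varepsilon_\ell$, because $u_{\bm{y}_\ell}g_{s_\ell}$ nearly preserves the counting region (see~\eqref{equylboxxnoest}). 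Letting $\ell\to\infty$ and using continuity of $c\mapsto|F_{r,c}|$ finishes the proof. If you want to keep your strategy you would need to actually supply the second-moment estimate, but the Fubini--Iwasawa route is the one that closes the argument cleanly.
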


Our result can be viewed as an analogue for spheres, of W.\ M.\ Schmidt's~\cite{schmidt60} classical counting result for Diophantine approximation in $\mathbb{R}^n$, although of course the latter holds for more general approximating functions and has an error term.
We also prove a spiraling result (Theorem~\ref{thmspiralingforsn}) about the direction of approximates $\alpha - \frac{\bm{p}}{q}$.
Such results about the distribution of approximates were considered first in~\cite{athreyaghoshtseng15, athreyaghoshtseng14} and also in~\cite{kleinbockshiweiss17}, for the case of Diophantine approximation in $\mathbb{R}^{n}$.
Finally, following~\cite{alamghosh20}, we also prove results for Diophantine approximation when $(\bm{p}, q)$ are elements of a `general lattice', not just of the integral lattice.\\

The only counting results regarding intrinsic Diophantine approximation on varieties that we are aware of are in~\cite{GGN20}.
In this paper, counting with error terms and discrepancy results are proved for intrinsic Diophantine approximation on semisimple group varieties using effective ergodic theorems for semisimple groups and quantitative duality principles.
However, even for the case in common, that of $S^3$, the results in the present paper are different from those in~\cite{GGN20} --- we count solutions for the `critical' or `Dirichlet' exponent in Diophantine approximation which is not the case in~\cite{GGN20}; however, in the range of exponents considered in~\cite{GGN20} stronger results are proved, including error terms as well as approximation using rational points with restricted denominators.
As far as we know, our counting results for the Dirichlet exponent in intrinsic Diophantine approximation constitute the first such results for \emph{any} variety.\\

Finally, our methods are completely different from~\cite{GGN20}.
We use the geometry of spheres, a dynamical reformulation of the counting problem (inspired by~\cite{kleinbockmerrill15}), Siegel's mean value formula and Birkhoff's ergodic theorem to prove our result.
The use of Birkhoff's ergodic theorem can be traced to~\cite{athreyaparrishtseng16} where it was used to give a new proof of (a special case of) W.\ M.\ Schmidt's theorem in classical Diophantine approximation for the Dirichlet exponent.
This idea was further developed in~\cite{alamghosh20} and extended to number fields as well as to arbitrary unimodular lattices.

\subsection*{Acknowledgments}
We thank the anonymous referees for their helpful remarks.
We thank Shucheng Yu for helpful discussions.


\section{Main results}\label{secmain}
We follow~\cite{kleinbockmerrill15}, where they explain a correspondence between intrinsic Diophantine approximation on $\mathbb{S}^n$ and dynamics on certain homogeneous spaces.

Let $\mathcal{Q} : \mathbb{R}^{n+2} \to \mathbb{R}$ be the quadratic form given by
\begin{equation}\label{eqquadform}
    \mathcal{Q}(\bm{x}) := \sum_{i=1}^{n+1} x_{i}^{2} - {x}_{n+2}^{2}, \ \text{where} \ \bm{x} = (x_1, \ldots, x_{n+2}).
\end{equation}
Then we can embed $\mathbb{S}^n$ into the positive light-cone
\[
    \mathscr{L} := \{\bm{x} \in \mathbb{R}^{n+2} : \mathcal{Q}(\bm{x}) = 0, x_{n+2} \geq 0\}
\]
of $\mathcal{Q}$ via $\alpha \mapsto (\alpha, 1)$.
We will refer to this image also as $\mathbb{S}^n$.
Under this embedding primitive points $(\bm{p}, q) \in \Lambda_0 := \mathscr{L} \cap \mathbb{Z}^{n+2}_{\mathrm{pr}}$ and rational points $\frac{\bm{p}}{q} \in \mathbb{S}^n$ have a one-to-one correspondence, more specifically the line joining $(\bm{p}, q)$ and $\bm{0}$ intersects $\mathbb{S}^n$ at $\frac{\bm{p}}{q}$.
As~\cite{kleinbockmerrill15} notes in Lemma 2.4, `good approximates' $\frac{\bm{p}}{q}$ to $\alpha \in \mathbb{S}^n$ correspond to points $(\bm{p}, q) \in \Lambda_0$ which are close to the line through $(\alpha, 1)$.

\begin{remark}\label{}
    For $\bm{x} \in \mathscr{L}$, $x_{n+2} \geq |x_i| ~\forall~ i = 1, \ldots, n+1$.
\end{remark}

Let $G := {\mathrm{SO}(\mathcal{Q})}^\circ \cong {\mathrm{SO}(n+1, 1)}^\circ$, i.e., $G$ is the connected component of identity of the group $\SO(\mathcal{Q})$.
By a \emph{lattice in} $\mathscr{L}$ we will mean a set of the form $g\Lambda_0$, where $g \in G$.
If $\Gamma$ denotes the stabilizer of $\Lambda_0$ in $G$, then $\Gamma$ is a lattice in $G$ containing $G_\mathbb{Z} := {\mathrm{SO}(\mathcal{Q})}_\mathbb{Z}^\circ$, the lattice of integer points of $G$, as a finite index subgroup.
The space of lattices in $\mathscr{L}$ can then be identified with the homogeneous space $X := G/\Gamma$.
Let $\mu$ be the left-invariant Haar measure on $G$ such that the induced unique left $G$-invariant measure on $X$, which we also denote by $\mu$, satisfies $\mu(X) = 1$.

Let ${\{\bm{u}_i\}}_{i=1}^{n+2}$ denote the standard Euclidean basis of $\mathbb{R}^{n+2}$, and $\bm{e}_1 := \bm{u}_{n+1} + \bm{u}_{n+2}$.
Let $K$ denote the subgroup of $G$ that preserves $\bm{u}_{n+2}$, i.e.,
\[
    K = \begin{pmatrix}
        \mathrm{SO}(n+1) &  \\
        \ & 1
    \end{pmatrix}
    \cong \mathrm{SO}(n+1).
\]
Then $K$ naturally acts on $\mathbb{R}^{n+1}$.
We endow $K$ with a unique left $K$-invariant probability Haar measure $\sigma$.

Note that $\mathbb{S}^n$ can be realized as a quotient of $K$, i.e., $\mathbb{S}^n \cong K/M$, where
\[
    M := \begin{pmatrix}
        \mathrm{SO}(n) &  \\
        \ & I_2
    \end{pmatrix} = \{k \in K : k\bm{u}_{n+1} = \bm{u}_{n+1}\}.
\]
Denote by $\d{m}$ the probability Haar measure on $M$.
This naturally endows $\mathbb{S}^n$ with a unique left $K$-invariant probability measure $\d{\widetilde{k}}$, and we note that there is a natural correspondence between full measure subsets of $K$ and full measure subsets of $\mathbb{S}^n$, under the projection $K \to K/M \cong \mathbb{S}^n$.

Let
\[
    g_t :=
    \renewcommand\arraystretch{1.5}
    \begin{pmatrix}
        I_n &  \\
        & \begin{matrix}
            \cosh t & -\sinh t \\
            -\sinh t & \cosh t
        \end{matrix}
    \end{pmatrix} \in G,
\]
and let
\[
    A := \{g_t : t \in \mathbb{R}\}.
\]
We endow $A$ with the natural measure $\d{t}$.

Setting $N$ to be the contracting horospherical subgroup associated to $\{g_t\}$, we have an Iwasawa decomposition of $G$, i.e., $G = NAK$.
Here
\[
    N = {\left\{u_{\bm{y}} =
            \renewcommand\arraystretch{2}
            \begin{pmatrix}
                I_n & -\bm{y} & \bm{y} \\
                \bm{y}^t & 1 - \frac{1}{2}\norm{\bm{y}}^2 & \frac{1}{2}\norm{\bm{y}}^2 \\
                \bm{y}^t & -\frac{1}{2}\norm{\bm{y}}^2 & 1 + \frac{1}{2}\norm{\bm{y}}^2
            \end{pmatrix} : \bm{y} =
            \renewcommand\arraystretch{1.3}
            \begin{pmatrix}
                y_1 \\
                \vdots \\
                y_n
            \end{pmatrix}\in \mathbb{R}^{n}\right\}},
\]
which is endowed with the natural measure $\d{\bm{y}}$.

It can be checked that under the coordinates $g = u_{\bm{y}}g_t k$ coming from the Iwasawa decomposition, $e^{-nt} \d{\bm{y}}\d{t}\d{\sigma(k)}$ is a left-invariant Haar measure on $G$, so by setting
\[
    \nu_\Gamma := \int_{G/\Gamma} e^{-nt} \d{\bm{y}}\d{t}\d{\sigma(k)}
\]
we see that
\[
    \d{\mu(g)} = \frac{1}{\nu_\Gamma} e^{-nt} \d{\bm{y}}\d{t}\d{\sigma(k)}.
\]

Note that $G$ acts on $\mathscr{L}$ by left-multiplication and the stabilizer of $\bm{e}_1$ is $Q := NM$, which is unimodular with a Haar measure $\d{q} := \d{\bm{y}}\d{m}$.
So this induces a unique left $G$-invariant measure $\lambda$ on $\mathscr{L}$.
Note that changing $\d{\bm{y}}\d{m}$ by a constant multiple changes $\lambda$ by a constant multiple.
Identifying $\mathscr{L} = G/Q$ with $K/M \times A$ we see that $\d{\widetilde{\lambda}} = e^{-nt} \d{t}\d{\widetilde{k}}$ is one such left $G$-invariant measure on $\mathscr{L}$.

For $f \in L^1(\mathscr{L}, \lambda)$, define $\widehat{f}$ (called the \emph{Siegel transform of $f$}) on $X$ by
\begin{equation}\label{eqsiegeltrans}
    \widehat{f}(\Lambda) := \sum_{\bm{v} \in \Lambda \smallsetminus \{\bm{0}\}} f(\bm{v}).
\end{equation}
Then the \emph{Siegel integral formula} (Theorem~\ref{thmsmvt}) states that if $\lambda$ is suitably normalized then for any such $f$ we have
\[
    \int_{X} \widehat{f} \d{\mu} = \int_{\mathscr{L}} f \d{\lambda}.
\]
We call a function $f$ on $\mathscr{L}$, \emph{Riemann integrable} if $f$ is bounded with compact support and is continuous except on a set of $\lambda$-measure zero.

Throughout the rest of the paper for $k \in K$ denote $k\Lambda_0$ by $\Lambda_k$, also for fixed $\Delta \in X$ denote $k\Delta$ by $\Delta_k$.

\subsection{Diophantine approximation and Schmidt's theorem on $\mathbb{S}^n$}\label{}
We will derive Diophantine properties of points on $\mathbb{S}^n$ from the dynamics of $\{g_t\}$-action on $X$.
Let $T, c > 0$.
For $\alpha \in \mathbb{S}^n$ we are interested in $N_{T, c}(\alpha)$ and $N_{T, c}(\alpha; \Delta)$, the number of solutions $(\bm{p}, q) \in \Lambda_0$ and $(\bm{p}, q) \in \Delta$ respectively, to
\begin{equation}\label{eqquandioonsn}
    \begin{split}
        \norm{\alpha - \frac{\bm{p}}{q}} < \frac{c}{q}, \\
        1 \leq q < \cosh T.
    \end{split}
\end{equation}
Define for $T > 0$
\begin{gather*}
    E_{T, c} := \{\bm{x} \in \mathscr{L} : 2x_{n+2}(x_{n+2} - x_{n+1}) < c^2, 1 \leq x_{n+2} < \cosh T\}, \\
    E_{T, c}(\Lambda) := E_{T, c} \cap \Lambda \ \text{for all} \ \Lambda \in X.
\end{gather*}

\begin{lemma}\label{}
    $(\bm{p}, q) \in \Delta$ is a solution to~\eqref{eqquandioonsn} if and only if $k(\bm{p}, q) \in E_{T, c} \ \forall k \in K$ with $k(\alpha, 1) = \bm{e}_1$.
\end{lemma}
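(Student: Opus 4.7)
The plan is to verify that the two conditions defining $E_{T,c}$ correspond exactly to the two parts of \eqref{eqquandioonsn} via a coordinate computation on $k\bm{v}$, where $\bm{v} := (\bm{p}, q)$. Two features of the action of $K \cong \SO(n+1)$ (embedded block-diagonally) drive everything: every $k \in K$ fixes $\bm{u}_{n+2}$, so preserves the last coordinate of any vector in $\mathbb{R}^{n+2}$, and every $k \in K$ acts as an element of $\SO(n+1)$ on the first $n+1$ coordinates, hence is a Euclidean isometry of $\mathbb{R}^{n+2}$. A $k \in K$ with $k(\alpha, 1) = \bm{e}_1$ exists because $\SO(n+1)$ acts transitively on the unit sphere in $\mathbb{R}^{n+1}$, and both $\alpha$ and $\bm{u}_{n+1}$ lie there.

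Fix such a $k$. Since $(k\bm{v})_{n+2} = v_{n+2} = q$, the condition $1 \leq q < \cosh T$ is immediately equivalent to $1 \leq (k\bm{v})_{n+2} < \cosh T$. For the approximation condition, I would decompose
\[
    k(\bm{p}/q, 1) = k(\alpha, 1) + k(\bm{p}/q - \alpha, 0) = \bm{e}_1 + (\bm{w}, 0),
\]
where $\bm{w} \in \mathbb{R}^{n+1}$ is obtained by applying $k$ to $\bm{p}/q - \alpha$ in the first $n+1$ coordinates (the last coordinate stays $0$ because $k$ fixes $\bm{u}_{n+2}$), so that $\norm{\bm{w}} = \norm{\bm{p}/q - \alpha}$. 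Using $K$-invariance of $\mathscr{L}$ and expanding $\mathcal{Q}(\bm{e}_1 + (\bm{w},0)) = 0$ yields $\norm{\bm{w}}^2 + 2w_{n+1} = 0$, i.e.\ $w_{n+1} = -\norm{\bm{w}}^2/2$. A short computation then gives
\[
    2(k\bm{v})_{n+2}\bigl((k\bm{v})_{n+2} - (k\bm{v})_{n+1}\bigr) = -2q^2 w_{n+1} = q^2 \norm{\bm{w}}^2 = \norm{q\alpha - \bm{p}}^2,
\]
which is $< c^2$ precisely when $\norm{\alpha - \bm{p}/q} < c/q$. This establishes the equivalence for the chosen $k$.

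The passage from "some such $k$" to "every such $k$" is automatic: any two $k_1, k_2 \in K$ with $k_i(\alpha,1) = \bm{e}_1$ differ by an element of the $K$-stabilizer of $\bm{e}_1$, which, since $K$ already fixes $\bm{u}_{n+2}$, equals the $K$-stabilizer of $\bm{u}_{n+1}$, namely $M$. But the defining inequalities of $E_{T,c}$ involve only the $(n+1)$-th and $(n+2)$-th coordinates, both fixed by $M$, so $E_{T,c}$ is $M$-invariant and membership is independent of which such $k$ is chosen. There is no genuine obstacle in the argument; it is essentially an unpacking of definitions, the one point of content being the observation that the expression $2x_{n+2}(x_{n+2} - x_{n+1})$ is designed precisely so that, on $\mathscr{L}$ and after the $K$-twist, it collapses to $\norm{q\alpha - \bm{p}}^2$.
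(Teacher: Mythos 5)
Your proof is correct and takes essentially the same approach as the paper: both hinge on the facts that $K$ preserves Euclidean norms and the last coordinate, and both use the light-cone constraint to convert the squared approximation error into $2x_{n+2}(x_{n+2}-x_{n+1})$. The only difference is organizational (the paper expands $\norm{q\bm{e}_1 - k(\bm{p},q)}^2$ directly using $\sum_{i=1}^{n+1}x_i^2 = x_{n+2}^2$, while you normalize to $\bm{e}_1 + (\bm{w},0)$ and rescale), and your added remark that $E_{T,c}$ is $M$-invariant — so the choice of $k$ with $k(\alpha,1)=\bm{e}_1$ is immaterial — is a worthwhile bit of explicitness that the paper leaves implicit.
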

\begin{proof}
    Let $k(\alpha, 1) = \bm{e}_1$ and denote $k(\bm{p}, q)$ by $\bm{x} \in \mathscr{L}$.
    Note that $x_{n+2} = q$.
    Now,
    \begin{align*}
        \norm{\alpha - \frac{\bm{p}}{q}} < \frac{c}{q} &\iff \norm{q\alpha - \bm{p}} < c \\
        &\iff \norm{q(\alpha, 1) - (\bm{p}, q)} < c \\
        &\iff \norm{qk(\alpha, 1) - k(\bm{p}, q)} < c \quad (\text{applying $k \in K$ doesn't change the norm}) \\
        &\iff \norm{(x_1, x_2, \ldots, x_n, x_{n+1} - x_{n+2}, 0)} < c \\
        &\iff 2x_{n+2}(x_{n+2} - x_{n+1}) < c^2 \quad (\text{using the fact that $\bm{x} \in \mathscr{L}$}).
    \end{align*}
    Therefore $\bm{x} \in E_{T, c}$.
\end{proof}

\begin{remark}\label{}
    \begin{enumerate}[label= (\roman*),font=\normalfont,before=\normalfont]
        \item $k(\alpha, 1) = \bm{e}_1$ is equivalent to saying $k(\alpha) = \bm{u}_{n+1}$.
        \item $N_{T, c}(\alpha) = \#(E_{T, c}(\Lambda_k))$ and $N_{T, c}(\alpha; \Delta) = \#(E_{T, c}(\Delta_k))$, where $k(\alpha, 1) = \bm{e}_1$.
    \end{enumerate}
\end{remark}

Define for $T > 0$
\begin{gather*}
    F_{T, c} := \{\bm{x}\in \mathscr{L} : x_{n+2}^2 - x_{n+1}^2 < c^2, 1 \leq x_{n+2} + x_{n+1} < e^T\}, \\
    F_{T, c}(\Lambda) := F_{T, c} \cap \Lambda \ \text{for all} \ \Lambda \in X.
\end{gather*}
In a later section we will sandwich $E_{T, c}$ between sets of the form $F_{T, c}$, more explicitly we will show that for all sufficiently large $\ell \in \mathbb{Z}_+$, $\exists c_\ell \uparrow c$, compact sets $C_0, C_\ell \subseteq \mathscr{L}$ and a constant $r_0$ only depending on $c$ such that for all $T > r_0$
\[
    F_{T - r_0, c_\ell} \smallsetminus C_\ell \subseteq E_{T, c} \smallsetminus C_0 \subseteq F_{T+r_0, c}.
\]

\begin{theorem}[Special case of Schmidt's theorem for $\mathbb{S}^n$]\label{thmschforsn}
    For a.e.\ $\alpha \in \mathbb{S}^n$
    \[
        N_{T, c}(\alpha; \Delta) \thicksim |E_{T, c}| \ \text{as} \ T \to \infty.
    \]
    In particular for a.e.\ $\alpha \in \mathbb{S}^n$
    \[
        N_{T, c}(\alpha) \thicksim |E_{T, c}| \ \text{as} \ T \to \infty.
    \]
\end{theorem}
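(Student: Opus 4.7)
The plan is to reformulate the counting problem as a Birkhoff ergodic average for the diagonal flow $\{g_t\}$ on $X$, evaluate the limit using Siegel's integral formula, and then transfer the almost-sure statement from $X$ to $S^n$. A direct matrix computation shows that on $\mathscr{L}$ the quantity $x_{n+2}^2 - x_{n+1}^2 = \sum_{i=1}^n x_i^2$ is $g_t$-invariant, while $(g_t\bm{x})_{n+1}+(g_t\bm{x})_{n+2} = e^{-t}(x_{n+1}+x_{n+2})$. Setting
\[
F^* := \{\bm{x} \in \mathscr{L} : x_{n+2}^2 - x_{n+1}^2 < c^2,\ 1 \leq x_{n+1}+x_{n+2} < e\},
\]
one obtains $F_{T,c} = \bigsqcup_{j=0}^{\lfloor T \rfloor - 1} g_{-j}F^*$ modulo a piece of bounded $\lambda$-measure, which yields both the volume asymptotic $|F_{T,c}| = \lfloor T \rfloor\, \lambda(F^*) + O(1)$ and the lattice-point identity
\[
\#(F_{T,c}(\Lambda)) = \sum_{j=0}^{\lfloor T \rfloor - 1} \widehat{\mathbf{1}_{F^*}}(g_j \Lambda) + O(1).
\]

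By the Siegel integral formula (Theorem~\ref{thmsmvt}), $\widehat{\mathbf{1}_{F^*}} \in L^1(X,\mu)$ with $\int_X \widehat{\mathbf{1}_{F^*}}\, d\mu = \lambda(F^*)$. Since $g_1$ is ergodic on $(X,\mu)$ (by Howe--Moore mixing of the geodesic flow on the finite-volume hyperbolic orbifold $\mathbb{H}^{n+1}/\Gamma$), Birkhoff's pointwise ergodic theorem gives, for $\mu$-a.e.\ $\Lambda \in X$,
\[
\tfrac{1}{T}\,\#(F_{T,c}(\Lambda)) \longrightarrow \lambda(F^*),
\]
and combining with the volume asymptotic, $\#(F_{T,c}(\Lambda)) \sim |F_{T,c}|$ for $\mu$-a.e.\ $\Lambda$.

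It remains to (i) transfer the $\mu$-a.e. conclusion to a $\sigma$-a.e. statement along the $K$-orbit $k \mapsto \Delta_k = k\Delta$ and (ii) pass from $F_{T,c}$-counts to $E_{T,c}$-counts. Step (ii) is routine from the sandwich
\[
F_{T-r_0,\, c_\ell} \smallsetminus C_\ell \ \subseteq\ E_{T,c} \smallsetminus C_0 \ \subseteq\ F_{T+r_0,\, c}
\]
by letting $\ell \to \infty$: the compact pieces $C_0, C_\ell$ contribute only $O(1)$ lattice points, the $r_0$-shift is absorbed into the $\sim$, and $|E_{T,c}|$ is itself squeezed between the $|F_{T\pm r_0,\cdot}|$ up to $O(1)$.

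The main obstacle is step (i), since the $K$-orbit $K\Delta$ has $\mu$-measure zero in $X$ and so one cannot directly deduce $\sigma$-a.e. convergence from $\mu$-a.e. convergence. The resolution passes through a quantitative equidistribution result for the $g_t$-translates of the absolutely continuous measure on $K\Delta$ induced by $\sigma$ --- e.g.\ via Howe--Moore mixing combined with a maximal inequality, or via Shah-type theorems on translates of subgroup orbits --- where good integrability of the Siegel transform beyond $L^1$ plays a key role in converting mean convergence statements into pointwise ones. The natural identification $S^n \cong K/M$ then converts $\sigma$-a.e. $k \in K$ into a.e. $\alpha \in S^n$, completing the proof. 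The specialization $\Delta = \Lambda_0$ yields the second assertion.
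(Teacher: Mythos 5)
Your reformulation in terms of the $g_t$-flow, the Siegel formula, and Birkhoff's theorem is correct and is essentially the paper's route for the almost-everywhere-in-$X$ statement (Theorem~\ref{thmschforlat}); so is the sandwich $F_{T-r_0, c_\ell}\smallsetminus C_\ell \subseteq E_{T,c}\smallsetminus C_0 \subseteq F_{T+r_0,c}$ and the passage from $F$-counts to $E$-counts. But you correctly flag step (i) --- descending from $\mu$-a.e.\ $\Lambda\in X$ to $\sigma$-a.e.\ $k\in K$ --- as the main obstacle, and this is exactly where your proposal has a genuine gap: invoking ``Shah-type equidistribution of $g_t K\Delta$ plus a maximal inequality plus $L^p$ integrability of the Siegel transform for $p>1$'' is a sketch of a strategy, not an argument. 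In particular, mean equidistribution of the pushforwards $g_{t*}(\sigma)$ does not by itself yield the pointwise (in $k$) Birkhoff averages you need; converting it into a $\sigma$-a.e.\ statement via a maximal inequality requires quantitative $L^p$ control of the Siegel transform on the light cone $\mathscr{L}$ and of the decay of matrix coefficients, none of which is established (or needed) in this paper, and the $L^2$ theory of Siegel transforms for quadratic surfaces is considerably more delicate than in the Euclidean case.

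The paper avoids all of this with a softer Fubini argument built on the Iwasawa decomposition $G = NAK$ (Proposition~\ref{propfubinitype}). Since $\mu$-a.e.\ point of $X$ is Birkhoff generic for $\widehat{f}_{r,c}$, writing $\mu$ in $NAK$-coordinates and applying Fubini shows that for a.e.\ $(u_{\bm{y}},g_s)\in N\times A$, the set of $k\in K$ with $u_{\bm{y}}g_s\Delta_k$ Birkhoff generic has full $\sigma$-measure. Choosing a sequence $(\bm{y}_\ell,s_\ell)\to (\bm{0},0)$ one extracts a single full-measure set $K_\infty\subseteq K$ such that $\Delta^{(\ell)} := u_{\bm{y}_\ell}g_{s_\ell}\Delta_k$ is Birkhoff generic (for $\widehat{f}_{r,c\pm\varepsilon_\ell}$) for all $\ell$ and all $k\in K_\infty$. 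A perturbation estimate on the coordinates $x_{n+1}\pm x_{n+2}$ under $u_{\bm{y}_\ell}g_{s_\ell}$ then sandwiches $u_{\bm{y}_\ell}g_{s_\ell}F_{T,c}$ between $F_{T\mp 1,\,c\mp\varepsilon_\ell}$ up to compact sets, so that $\#(F_{T,c}(\Delta_k))$ is trapped between the counts for the generic lattices $\Delta^{(\ell)}$; letting $\ell\to\infty$ and using continuity of $c\mapsto |F_{r,c}|$ closes the argument. This is more elementary than the equidistribution route you sketch, and in particular it requires only the $L^1$ Siegel formula and the Birkhoff theorem, with no spectral input, no maximal inequalities, and no higher integrability of the Siegel transform.
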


Here and throughout the rest of the paper $| \cdot |$ will denote the measure of a measurable subset of $\mathscr{L}$ with respect to $\lambda$ and $f(T) \thicksim g(T)$ as $T \to \infty$ will mean that $\frac{f(T)}{g(T)} \to 1$ as $T \to \infty$.
The above result, in view of the volume computations in \S\ref{secvolcomp}, proves Theorem \ref{thmintropaper}.
Theorem \ref{thmschforsn} can be derived immediately from the following.

\begin{theorem}\label{thmschforKlat}
    For a.e.\ $k \in K$
    \begin{gather*}
        \#(F_{T, c}(\Delta_k)) \thicksim |F_{T, c}|, \\
        \#(E_{T, c}(\Delta_k)) \thicksim |E_{T, c}|.
    \end{gather*}
\end{theorem}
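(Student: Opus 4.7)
The plan is to recast $\#(F_{T,c}(\Delta_k))$ as a Birkhoff ergodic sum for the $g_1$-action on $X$, identify the limit via Siegel's mean value formula, and transfer the resulting $\mu$-a.e.\ statement on $X$ to a $\sigma$-a.e.\ statement on $K$. Since $g_t$ multiplies $x_{n+1}+x_{n+2}$ by $e^{-t}$ and preserves $x_{n+2}^{2}-x_{n+1}^{2}$, setting $F_{c}^{\ast} := \{x \in \mathscr{L} : x_{n+2}^{2}-x_{n+1}^{2}<c^{2},\ 1\le x_{n+1}+x_{n+2}<e\}$ gives a decomposition $F_{T,c} = \bigsqcup_{i=0}^{\lfloor T\rfloor-1} g_{-i}F_{c}^{\ast}$ up to a tail of bounded $\lambda$-measure. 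Because $\lambda$ is $G$-invariant, this yields
\[
\#(F_{T,c}(\Delta_{k})) = \sum_{i=0}^{\lfloor T\rfloor-1}\widehat{\chi_{F_{c}^{\ast}}}(g_{i}k\Delta) + O(1), \qquad |F_{T,c}| = \lfloor T\rfloor\,\lambda(F_{c}^{\ast}) + O(1),
\]
so the theorem reduces to proving $\frac{1}{N}\sum_{i=0}^{N-1}\widehat{\chi_{F_{c}^{\ast}}}(g_{i}k\Delta)\to \lambda(F_{c}^{\ast})$ for $\sigma$-a.e.\ $k$.

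The set $F_{c}^{\ast}$ is bounded in $\mathscr{L}$, so $\chi_{F_{c}^{\ast}}$ is Riemann integrable and by Theorem~\ref{thmsmvt} its Siegel transform lies in $L^{1}(X,\mu)$ with $\int_{X}\widehat{\chi_{F_{c}^{\ast}}}\,d\mu = \lambda(F_{c}^{\ast})$. The $g_{1}$-action on $X$ is ergodic by Howe--Moore, so Birkhoff's pointwise ergodic theorem gives the desired convergence for $\mu$-a.e.\ $\Lambda \in X$; let $B\subset X$ denote the $\mu$-null exceptional set. In any local Iwasawa chart around a point $k_{0} \in K$, the map $(u_{y},g_{t},k)\mapsto u_{y}g_{t}k\Delta$ is a diffeomorphism onto an open set in $X$ with $\mu$ pulling back to a positive multiple of $e^{-nt}\,dy\,dt\,d\sigma(k)$. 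Fubini therefore forces $\sigma$-a.e.\ $k$ in the chart to satisfy: the slice $\{(y,t) : u_{y}g_{t}k\Delta \in B\}$ has Lebesgue measure zero. Covering $K$ by finitely many such charts promotes this conclusion to $\sigma$-a.e.\ $k \in K$.

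For such a generic $k$, choose $(y_{m},t_{m}) \to (0,0)$ with $\Lambda_{m} := u_{y_{m}}g_{t_{m}}k\Delta \notin B$, so that Birkhoff converges at $\Lambda_{m}$. The contraction identity $g_{i}u_{y_{m}}g_{t_{m}} = u_{e^{-i}y_{m}}g_{i+t_{m}}$ rewrites $\widehat{\chi_{F_{c}^{\ast}}}(g_{i}\Lambda_{m}) = \widehat{\chi_{u_{-e^{-i}y_{m}}F_{c}^{\ast}}}(g_{i+t_{m}}k\Delta)$. For any $\delta>0$ one has $F_{c-\delta}^{\ast}\subseteq u_{-e^{-i}y_{m}}F_{c}^{\ast}\subseteq F_{c+\delta}^{\ast}$ for $i\ge i_{0}(\delta,y_{m})$; combining this with a Riemann-integrable sandwich $h_{-}^{\delta}\le \chi_{F_{c}^{\ast}}\le h_{+}^{\delta}$ by continuous compactly supported functions (whose Siegel transforms are continuous on $X$) and the negligible shift by $t_{m}$ in the Cesàro sum, one squeezes
\[
\lambda(F_{c}^{\ast}) = \lim_{N\to\infty}\frac{1}{N}\sum_{i=0}^{N-1}\widehat{\chi_{F_{c}^{\ast}}}(g_{i}k\Delta).
\]
The main obstacle is precisely this last transfer: the $K$-orbit $\{k\Delta\}$ is a $\mu$-null submanifold of $X$, so Birkhoff's conclusion at $\mu$-a.e.\ starting point does not a priori cover any point on it, and it is the $N$-contraction of $g_{t}$ together with Riemann integrability that dynamically steers a generic nearby good point onto the orbit while absorbing the discontinuities of the Siegel transform. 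Once the $F_{T,c}$-asymptotic is secured, the containment $F_{T-r_{0},c_{\ell}}\smallsetminus C_{\ell}\subseteq E_{T,c}\smallsetminus C_{0}\subseteq F_{T+r_{0},c}$ stated in the excerpt immediately transfers it to $E_{T,c}$: the compact pieces $C_{0},C_{\ell}$ contribute only $O(1)$ points of $\Delta_{k}$ for a.e.~$k$ (negligible against the linear growth of $|F_{T,c}|$ in $T$), and letting $c_{\ell}\uparrow c$ closes the gap between upper and lower bounds to yield $\#(E_{T,c}(\Delta_{k}))\sim |E_{T,c}|$.
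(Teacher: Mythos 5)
Your high-level strategy is the same as the paper's: convert the count to a Birkhoff-style average for the $g_t$-flow, identify the limit via ergodicity and Siegel's formula on $X$, use Fubini in Iwasawa coordinates to pass from $\mu$-a.e.\ lattices to $\sigma$-a.e.\ $k\in K$, and exploit the $N$-contraction to steer a nearby Birkhoff-generic point onto the $K$-orbit of $\Delta$. Your Fubini slicing (fix $k$, vary $(y,t)$) is the transpose of the paper's Proposition~\ref{propfubinitype} (fix $(y,t)$, vary $k$), but both are valid readings of the same product formula. The discretisation into translates of the slab $F_c^*=\{x_{n+2}^2-x_{n+1}^2<c^2,\ 1\le x_{n+1}+x_{n+2}<e\}$ is also the same device as the paper's~\eqref{eqsandwichFfhat1}--\eqref{eqsandwichFfhat2}.

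The gap is the sandwich $F_{c-\delta}^{*}\subseteq u_{-e^{-i}y_m}F_c^{*}\subseteq F_{c+\delta}^{*}$ for $i\ge i_0(\delta,y_m)$, which is false. The element $u_{\bm z}$ does not preserve the level sets of $x_{n+1}+x_{n+2}$: for $\bm x\in F_c^*$ with $\widetilde{\bm x}\neq 0$ one has, to first order, $(u_{-\bm z}\bm x)_{n+1}+(u_{-\bm z}\bm x)_{n+2}\approx(x_{n+1}+x_{n+2})-2\langle\widetilde{\bm x},\bm z\rangle$, so points with $x_{n+1}+x_{n+2}$ arbitrarily close to $1$ (or $e$) leave the slab no matter how small $\bm z$ is, and conversely some points with $x_{n+1}+x_{n+2}$ just below $1$ enter it. Thus $u_{-e^{-i}y_m}F_c^{*}$ is neither contained in $F_{c+\delta}^{*}$ nor contains $F_{c-\delta}^{*}$; the symmetric difference contains a nontrivial collar around the hyperplanes $\{x_{n+1}+x_{n+2}=1\}$ and $\{x_{n+1}+x_{n+2}=e\}$. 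This is exactly why the paper sandwiches the full sets $F_{T,c}$ rather than one fundamental slab: there the moving boundary $x_{n+1}+x_{n+2}=e^{T}$ is absorbed by the shift $T\mapsto T\pm1$ (Ces\`aro-negligible), while the fixed boundary $x_{n+1}+x_{n+2}=1$ is absorbed by discarding the precompact sets $\mathcal{D}_1^{(\ell)},\mathcal{D}_2^{(\ell)}$, which contribute only $O(1)$ lattice points. Your appeal to a continuous Riemann-integrable majorant/minorant $h_\pm^\delta$ does not by itself repair this: to squeeze $\frac1N\sum_i\widehat{\chi_{F_c^*}}(g_ik\Delta)$ you would need to control $\frac1N\sum_i\widehat{h_\pm^\delta}(g_{i+t_m}k\Delta)$, which is an average along the orbit of $k\Delta$ --- precisely the quantity you do not yet control. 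To close the argument you need to replace the false slab containment with the paper's excluded-compact-set sandwich~\eqref{equylboxxnoest}.
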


Theorems~\ref{thmschforsn} and~\ref{thmschforKlat} will be deduced from the following results concerning unimodular lattices in $X$.

\begin{theorem}\label{thmschforlat}
    For $\mu$-a.e.\ $\Lambda \in X$
    \begin{equation}\label{eqschforlat}
        \#(F_{T, c}(\Lambda)) \thicksim |F_{T, c}| \ \text{as} \ T \to \infty.
    \end{equation}
\end{theorem}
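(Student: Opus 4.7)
My plan is to recognize $F_{T,c}$ as a disjoint union of $g_t$-translates of a fixed compact ``slab'', converting the counting statement into a Birkhoff sum that can be analyzed via the pointwise ergodic theorem combined with Siegel's integral formula.

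First I set up the slab decomposition. Writing $a(\bm{x}) := x_{n+1} + x_{n+2}$ and $b(\bm{x}) := x_{n+2} - x_{n+1}$, so that $a(\bm{x})b(\bm{x}) = x_{n+2}^2 - x_{n+1}^2$ on $\mathscr{L}$, a direct computation on the last two coordinates shows that $g_t$ scales $a \mapsto e^{-t}a$ and $b \mapsto e^{t}b$, so the product $ab$ is $g_t$-invariant. I then define the fundamental slab
$$B := \bigl\{\bm{x} \in \mathscr{L} : 1 \le a(\bm{x}) < e,\ a(\bm{x})b(\bm{x}) < c^{2}\bigr\},$$
and observe that for each positive integer $N$,
$$F_{N,c} = \bigsqcup_{k=0}^{N-1} g_{-k}B,$$
the translates being pairwise disjoint by their $a$-ranges $[e^k, e^{k+1})$. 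Since $\lambda$ is $G$-invariant this immediately yields $|F_{N,c}| = N|B|$, and from the identity $\widehat{\mathbf{1}_B}(g_k\Lambda) = \#\bigl(g_{-k}B \cap \Lambda\bigr)$ I obtain
$$\#\bigl(F_{N,c}(\Lambda)\bigr) = \sum_{k=0}^{N-1} \widehat{\mathbf{1}_B}(g_k\Lambda).$$

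Next I invoke the ergodic-theoretic machinery. The closure of $B$ in $\mathscr{L}$ is compact (the bounds $a \in [1,e)$ and $ab < c^{2}$ force $b$ to be bounded, after which $\sum_{i=1}^{n}x_i^{2} = ab$ bounds the remaining coordinates), and $\partial B$ has $\lambda$-measure zero, so $\mathbf{1}_B$ is Riemann integrable in the sense of Section~2 and $|B| < \infty$. By Siegel's integral formula (Theorem~\ref{thmsmvt}), $\widehat{\mathbf{1}_B} \in L^{1}(X,\mu)$ with $\int_X \widehat{\mathbf{1}_B}\,d\mu = |B|$. The element $g_1$ generates a noncompact one-parameter subgroup of the simple noncompact Lie group $G$, so by the Howe--Moore theorem its action on $(X,\mu)$ is mixing, and in particular ergodic. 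Birkhoff's pointwise ergodic theorem then gives, for $\mu$-a.e.\ $\Lambda \in X$,
$$\frac{1}{N}\#\bigl(F_{N,c}(\Lambda)\bigr) = \frac{1}{N}\sum_{k=0}^{N-1} \widehat{\mathbf{1}_B}(g_k\Lambda) \longrightarrow |B| \quad \text{as } N \to \infty,$$
which is exactly~\eqref{eqschforlat} along integer values of $T$. For non-integer $T$ I interpolate using the monotonicity $F_{\lfloor T \rfloor, c} \subseteq F_{T,c} \subseteq F_{\lceil T \rceil, c}$ together with $|F_{T,c}| = T|B|$, sandwiching the normalized counts between their values at $\lfloor T \rfloor$ and $\lceil T \rceil$ to push the a.e.\ statement to all real $T \to \infty$.

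The main obstacle I foresee is a bookkeeping one: verifying that $B$ is genuinely bounded in $\mathscr{L}$ (so that Siegel applies and $\widehat{\mathbf{1}_B} \in L^1$) and that the tiling $F_{N,c} = \bigsqcup_{k=0}^{N-1} g_{-k}B$ is exact, including at the boundaries in $a$. Beyond these checks the only non-elementary input is the ergodicity of the $g_1$-action via Howe--Moore, which is standard for $G = \mathrm{SO}(n+1,1)^{\circ}$.
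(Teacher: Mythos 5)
Your proof is correct, and it takes a genuinely different route from the paper's. Both arguments share the same three pillars (Siegel's integral formula, ergodicity of the diagonal action via Moore/Howe--Moore, and the Birkhoff ergodic theorem), but they organize the counting differently. You observe that $F_{N,c}$ tiles \emph{exactly} as $\bigsqcup_{k=0}^{N-1}g_{-k}B$ with $B=F_{1,c}$ a precompact slab, so that $\#(F_{N,c}(\Lambda))$ is literally the discrete Birkhoff sum $\sum_{k=0}^{N-1}\widehat{\mathbf{1}_B}(g_1^k\Lambda)$; you then apply the pointwise ergodic theorem to the single transformation $g_1$ (whose action is ergodic because $\langle g_1\rangle$ is a noncompact closed subgroup). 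The paper instead works with the continuous flow $\{g_t\}$: for a window function $f_{r,c}=\mathbf{1}_{F_{r,c}}$ it derives the sandwich $\frac1r\int_0^{T-r}\widehat f_{r,c}(g_t\Lambda)\,dt\le\#(F_{T,c}(\Lambda))\le\frac1r\int_0^T\widehat f_{r,c}(g_t\Lambda)\,dt+\#(F_{r,c}(\Lambda))$, because a generic point spends time exactly $r$ in $F_{r,c}$ along the flow, and then applies the continuous-time Birkhoff theorem. Your version is tighter for this particular theorem (an exact identity replaces the two-sided inequality), at the small cost of a separate interpolation step for non-integer $T$, which you handle correctly via $|F_{T,c}|=T|B|$ and monotonicity. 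The paper's continuous-time framing, on the other hand, is what it reuses verbatim in the later sections (Birkhoff genericity with respect to $\widehat f_{r,c}$ for the descent to $K$, and the weak-$*$ convergence lemmas for the spiraling result), so if you were reorganizing the whole paper around the slab decomposition you would need to restate those in discrete time as well. All your auxiliary checks are sound: $g_t$ scales $a=x_{n+1}+x_{n+2}$ by $e^{-t}$ and $b=x_{n+2}-x_{n+1}$ by $e^t$, $b\ge0$ on $\mathscr L$ forces $B$ to be bounded, $\partial B$ is $\lambda$-null, and $\widehat{\mathbf{1}_B}\in L^1(X,\mu)$ follows from Siegel since $\mathbf{1}_B\in L^1(\mathscr L,\lambda)$.
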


\begin{corollary}\label{corETcasympforlat}
    For any $\Lambda \in X$ satisfying~\eqref{eqschforlat} we have
    \[
        \#(E_{T, c}(\Lambda)) \thicksim |E_{T, c}| \ \text{as} \ T \to \infty.
    \]
\end{corollary}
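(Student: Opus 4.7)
The plan is to exploit the sandwich relation mentioned just after the definition of $F_{T, c}$: for each sufficiently large $\ell \in \mathbb{Z}_+$, there exist $c_\ell \uparrow c$, compact sets $C_0, C_\ell \subseteq \mathscr{L}$, and a constant $r_0 > 0$ (depending only on $c$) such that, for every $T > r_0$,
\[
F_{T - r_0, c_\ell} \smallsetminus C_\ell \;\subseteq\; E_{T, c} \smallsetminus C_0 \;\subseteq\; F_{T + r_0, c}.
\]
Since $\Lambda \in X$ is discrete in $\mathscr{L}$ and $C_0, C_\ell$ are compact, the sets $C_0 \cap \Lambda$ and $C_\ell \cap \Lambda$ are finite. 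Intersecting the sandwich with $\Lambda$ and counting therefore gives
\[
\#F_{T - r_0, c_\ell}(\Lambda) - \#(C_\ell \cap \Lambda) \;\leq\; \#E_{T, c}(\Lambda) \;\leq\; \#F_{T + r_0, c}(\Lambda) + \#(C_0 \cap \Lambda),
\]
with correction terms independent of $T$.

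I would then invoke Theorem~\ref{thmschforlat} applied both to $c$ and to each $c_\ell$. The exceptional set in that theorem depends on the parameter, but a countable intersection of full-measure sets is again full measure, so without loss of generality the given $\Lambda$ satisfies~\eqref{eqschforlat} for $c$ and for every $c_\ell$ simultaneously. The displayed inequalities then become
\[
|F_{T - r_0, c_\ell}|(1 + o(1)) \;\leq\; \#E_{T, c}(\Lambda) \;\leq\; |F_{T + r_0, c}|(1 + o(1)) \qquad (T \to \infty).
\]

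To conclude, I would pin down the $\lambda$-asymptotics of $|F_{T, c}|$ and $|E_{T, c}|$. By $G$-invariance of $\lambda$ and the fact that $g_t$ translates the range of $s := x_{n+1} + x_{n+2}$ on $F_{T, c}$ from $[1, e^T)$ to $[e^{-t}, e^{T - t})$, one concludes $|F_{T, c}| = \kappa(c)\, T$ for a constant $\kappa(c)$ that varies continuously with $c$ (essentially $\vol(B_n(c))$ up to normalization of $\lambda$). Applying the same sandwich at the level of $\lambda$-measure yields $|E_{T, c}| = \kappa(c)\, T + O(1)$. Dividing the chain of inequalities by $|E_{T, c}|$, then sending $T \to \infty$ followed by $\ell \to \infty$ (so that $\kappa(c_\ell) \to \kappa(c)$ by continuity), squeezes both the $\liminf$ and $\limsup$ of $\#E_{T, c}(\Lambda) / |E_{T, c}|$ to $1$, which is the claimed asymptotic. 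The only delicate point is the $\ell \to \infty$ passage, which rests on continuity of $c' \mapsto \kappa(c')$; that follows from the explicit volume formula, and every other step is bookkeeping once the sandwich inclusion and Theorem~\ref{thmschforlat} are in place.
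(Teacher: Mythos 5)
Your proof is correct and follows essentially the same route as the paper: sandwich $E_{T,c}(\Lambda)$ between $F_{T-r_0,c_\ell}(\Lambda)$ and $F_{T+r_0,c}(\Lambda)$ modulo finitely many lattice points, invoke Theorem~\ref{thmschforlat} for $c$ and each $c_\ell$, then squeeze using the linear growth $|F_{T,c}| = |F_{1,c}|\,T$ and continuity of $c' \mapsto |F_{1,c'}|$, sending $T\to\infty$ before $\ell\to\infty$ --- which is precisely the paper's appeal to \S\ref{ssecasympETcvol} and~\eqref{eqETcAasymp}. Two small remarks: the intermediate assertion $|E_{T,c}| = \kappa(c)\,T + O(1)$ overstates what the sandwich actually yields (the lower bound from $F_{T-r_0,c_\ell}$ gives $\kappa(c_\ell)T - O_\ell(1)$, and only after $\ell\to\infty$ does one recover $|E_{T,c}| \sim \kappa(c)\,T$, which is all you use anyway); and your ``without loss of generality'' step implicitly strengthens the hypothesis of the corollary from ``$\Lambda$ satisfies~\eqref{eqschforlat} for the given $c$'' to ``$\Lambda$ satisfies it for $c$ and every $c_\ell$,'' but this is the same reading the paper's own proof tacitly relies on, and it is harmless in the only place the corollary is later invoked.
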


\subsection{Spiraling of approximations and spherical averages.}\label{}
We want to define `direction' of the approximate $\alpha - \frac{\bm{p}}{q}$ in~\eqref{eqquandioonsn}.
Let $\mathbb{S}^n_\bullet$ denote the doubly punctured sphere, i.e.,
\[
    \mathbb{S}^n_\bullet := \mathbb{S}^n \smallsetminus \{\pm\bm{u}_{n+1}\}.
\]
Define a function $\pi : \mathbb{S}^n_\bullet \to \mathbb{S}^{n-1} \subseteq \mathbb{R}^{n} \cong \mathcal{T}_{\bm{u}_{n+1}}(\mathbb{S}^n) \subseteq \mathbb{R}^{n+1}$ as follows.
For $\bm{x} = (x_1, \ldots, x_{n+1})$ in $\mathbb{S}^n_\bullet$ project $\bm{x}$ onto the tangent space of $\mathbb{S}^n$ at $\bm{u}_{n+1}$, denoted by $\mathcal{T}_{\bm{u}_{n+1}}(\mathbb{S}^n)$, to get $(x_1, \ldots, x_n, 0)$, which is non-zero.
We project this point onto $\mathbb{S}^{n-1} \subseteq \mathcal{T}_{\bm{u}_{n+1}}(\mathbb{S}^n)$ to define $\pi(\bm{x})$, i.e.,
\begin{equation}\label{eqpionsn}
    \pi(\bm{x}) := \frac{(x_1, \ldots, x_n)}{\norm{(x_1, \ldots, x_n)}}.
\end{equation}
$\pi(\bm{x})$ denotes the direction of $\bm{x} \in \mathbb{S}^n_\bullet$ away from $\bm{u}_{n+1}$.
Inspired by~\eqref{eqpionsn} we define $\pi$ on $\mathscr{L}_\bullet := \mathscr{L} \smallsetminus \{\bm{x} \in \mathscr{L} : |x_{n+1}| = x_{n+2}\}$ as
\begin{equation}\label{eqpionL}
    \pi(\bm{x}) := \frac{(x_1, \ldots, x_n)}{\norm{(x_1, \ldots, x_n)}}.
\end{equation}
Note that $\pi$ is defined a.e.\ on $\mathscr{L}$.

For a.e.\ $\alpha \in \mathbb{S}^n$, for any $(\bm{p}, q) \in \Delta$, $\norm{\bm{u}_{n+1} - \frac{k(\bm{p})}{q}} = \norm{\alpha - \frac{\bm{p}}{q}} \neq 0, \ \forall k \in K$ with $k(\alpha) = \bm{u}_{n+1}$; hence for $(\bm{p}, q)$ satisfying~\eqref{eqquandioonsn} with $q > c/2$ we have $\frac{k(\bm{p})}{q} \in \mathbb{S}^n_\bullet$ and $k(\bm{p}, q) \in \mathscr{L}_\bullet$ irrespective of which $k$ satisfying $k(\alpha) = \bm{u}_{n+1}$ we pick.
Hence $\pi\left(\frac{k(\bm{p})}{q}\right) = \pi(k(\bm{p}, q))$ is well-defined.
For $\frac{\bm{p}}{q} \neq \alpha$ the `direction' of $\frac{\bm{p}}{q}$ away from $\alpha$ corresponds to the `direction' of $\frac{k(\bm{p})}{q}$ away from $\bm{u}_{n+1}$.

For fixed $k \in K$ with $k(\alpha) = \bm{u}_{n+1}$ and $A \subseteq \mathbb{S}^{n-1}$ with boundary measure zero, let $N_{T, c, A}(\alpha, k)$ and $N_{T, c, A}(\alpha, k; \Delta)$ denote the number of solutions $(\bm{p}, q) \in \Lambda_0$ and $(\bm{p}, q) \in \Delta$ respectively to
\begin{equation}\label{eqquandiosnspiraling}
    \begin{split}
        \norm{\alpha - \frac{\bm{p}}{q}} < \frac{c}{q}, \\
        1 \leq q < \cosh T, \\
        \pi(k(\bm{p}, q)) \in A.
    \end{split}
\end{equation}
Define
\begin{gather*}
    E_{T, c, A} := \{\bm{x} \in E_{T, c} : \pi(\bm{x}) \in A\}, \\
    E_{T, c, A}(\Lambda) := E_{T, c, A} \cap \Lambda, \\
    F_{T, c, A} := \{\bm{x} \in F_{T, c} : \pi(\bm{x}) \in A\}, \\
    F_{T, c, A}(\Lambda) := F_{T, c, A} \cap \Lambda.
\end{gather*}
Then
\begin{gather*}
    N_{T, c, A}(\alpha, k) = \#(E_{T, c, A}(\Lambda_k)), \\
    N_{T, c, A}(\alpha, k; \Delta) = \#(E_{T, c, A}(\Delta_k)), \\
\end{gather*}

\begin{theorem}[Spiraling for $\mathbb{S}^n$]\label{thmspiralingforsn}
    Let $A \subseteq \mathbb{S}^{n-1}$ be a measurable set with boundary measure zero, then for a.e.\ $\alpha \in \mathbb{S}^n$ there exists $k \in K$ with $k(\alpha) = \bm{u}_{n+1}$ such that
    \[
        N_{T, c, A}(\alpha, k; \Delta) = \#(E_{T, c, A}(\Delta_k)) \thicksim |E_{T, c, A}| \ \text{as} \ T \to \infty.
    \]
    In particular, for a.e.\ $\alpha \in \mathbb{S}^n$ there exists $k \in K$ with $k(\alpha) = \bm{u}_{n+1}$ such that
    \[
        N_{T, c, A}(\alpha, k) = \#(E_{T, c, A}(\Lambda_k)) \thicksim |E_{T, c, A}| \ \text{as} \ T \to \infty.
    \]
\end{theorem}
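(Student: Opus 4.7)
The plan is to run the same three-step scheme used for Theorem~\ref{thmschforsn}: (i) establish an asymptotic for $\#(F_{T,c,A}(\Lambda))$ via Birkhoff's pointwise ergodic theorem for the $\{g_t\}$-flow on $(X,\mu)$ combined with Siegel's mean value formula; (ii) deduce the analogue for $\#(E_{T,c,A}(\Lambda))$ by sandwiching between suitably dilated $F_{T\pm r_0,c,A}$-sets; (iii) transfer the a.e.\ statement from $\Lambda\in X$ to $\alpha\in S^n$ via the parametrization $K\to K/M\cong S^n$. The only genuinely new ingredient is the simple observation that $\pi$ depends only on the first $n$ coordinates while $g_t$ acts trivially on those coordinates; hence $\pi\circ g_t=\pi$ on $\mathscr{L}_\bullet$, and the angular constraint $\pi(\bm{x})\in A$ is preserved by the dynamics. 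It therefore plays no role beyond restricting the base slab from $F_{1,c}$ to $F_{1,c,A}$.

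\textbf{Dynamical count and sandwich.} The first step is the analogue of Theorem~\ref{thmschforlat}: for $\mu$-a.e.\ $\Lambda\in X$, $\#(F_{T,c,A}(\Lambda))\thicksim|F_{T,c,A}|$. Up to a boundary piece of bounded $\lambda$-measure, $F_{T,c,A}$ decomposes as a disjoint union of $g_{-i}$-translates of $F_{1,c,A}$ for $i=0,1,\ldots,\lfloor T\rfloor-1$; here the $\{g_t\}$-invariance of $\pi$ is exactly what makes this decomposition compatible with the angular constraint. Summing gives $\#(F_{T,c,A}(\Lambda))$ approximately $\sum_{i=0}^{\lfloor T\rfloor-1}\widehat{\mathbf{1}_{F_{1,c,A}}}(g_i\Lambda)$. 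By Birkhoff applied to the ergodic $\{g_t\}$-action on $(X,\mu)$, and the identity $\int_X\widehat{\mathbf{1}_{F_{1,c,A}}}\,d\mu=|F_{1,c,A}|$ from Siegel's formula, this average tends to $|F_{1,c,A}|$ for $\mu$-a.e.\ $\Lambda$, and $|F_{T,c,A}|/T$ tends to the same limit. The boundary-measure-zero hypothesis on $A$ is what makes $\mathbf{1}_{F_{1,c,A}}$ Riemann integrable on $\mathscr{L}$ so that Siegel's formula applies. Intersecting the sandwich $F_{T-r_0,c_\ell}\setminus C_\ell\subseteq E_{T,c}\setminus C_0\subseteq F_{T+r_0,c}$ with $\pi^{-1}(A)$ (a set cut out purely in the first $n$ coordinates, hence compatible with every piece of the sandwich) then yields the analogue of Corollary~\ref{corETcasympforlat}: for $\mu$-a.e.\ $\Lambda\in X$, $\#(E_{T,c,A}(\Lambda))\thicksim|E_{T,c,A}|$.

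\textbf{From $X$ to $S^n$; the main obstacle.} Next I would repeat the passage from Theorem~\ref{thmschforlat} to Theorem~\ref{thmschforKlat} verbatim for the fixed lattice $\Delta$, transferring the conclusion to $\#(E_{T,c,A}(\Delta_k))\thicksim|E_{T,c,A}|$ for $\sigma$-a.e.\ $k\in K$. Finally, identifying $S^n$ with $K/M$ and applying Fubini to the disintegration of $\sigma$ along the $M$-fibration, for a.e.\ $\alpha\in S^n$ the $M$-coset $\{k\in K:k(\alpha)=\bm{u}_{n+1}\}$ meets the good set in a full $M$-measure subset, so at least one (indeed almost every) such $k$ witnesses the theorem. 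The two genuine obstacles are the $L^1$-control of the Siegel transform $\widehat{\mathbf{1}_{F_{1,c,A}}}$ on $X$ (needed both for Birkhoff and for the validity of Siegel's formula), and the transfer from $\mu$-a.e.\ $\Lambda$ to $\sigma$-a.e.\ $k$, which is nontrivial since a single $K$-orbit is $\mu$-null in $X$; both are handled exactly as in the corresponding steps for Theorem~\ref{thmschforKlat}, with the angular restriction introducing no new difficulty.
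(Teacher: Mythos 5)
Your overall scheme (Siegel + Birkhoff for the $\{g_t\}$-flow, sandwiching $E_{T,c,A}$ by $F_{T\pm r_0,\cdot,A}$, then descending from $X$ to $K$ via Iwasawa) matches the paper, and your observations that $\pi\circ g_t=\pi$ and that $\pi^{-1}(A)$ is compatible with the $E$-versus-$F$ sandwich are correct and exactly what the paper uses. The problem is the transfer step, which you claim works ``verbatim.''

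The proof of Theorem~\ref{thmschforKlat} descends from $\mu$-a.e.\ $\Lambda$ to $\sigma$-a.e.\ $k$ by choosing $(u_{\bm{y}_\ell},g_{s_\ell})\to e$ in $NA$ and sandwiching $u_{\bm{y}_\ell}g_{s_\ell}F_{T,c}$ between $F_{T-1,c-\varepsilon_\ell}$ and $F_{T+1,c+\varepsilon_\ell}$ up to compact pieces. This step does \emph{not} commute with the angular constraint: $g_t$ fixes the first $n$ coordinates, but $u_{\bm{y}}$ does not, since $(u_{\bm{y}}\bm{x})_{1,\ldots,n}=\widetilde{\bm{x}}+\bm{y}(x_{n+2}-x_{n+1})$. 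The perturbation in $\widetilde{\bm{x}}$ is bounded (by $\norm{\bm{y}_\ell}c^2$ on $F_{T,c}$), but $\pi(\bm{x})=\widetilde{\bm{x}}/\norm{\widetilde{\bm{x}}}$ has a singularity where $\widetilde{\bm{x}}$ is small, i.e.\ where $x_{n+2}^2-x_{n+1}^2$ is close to $0$; this locus has positive $\lambda$-measure inside $F_{T,c}$, so the distortion of $\pi$ under $u_{\bm{y}_\ell}$ is not uniformly small there. Consequently one cannot sandwich $u_{\bm{y}_\ell}g_{s_\ell}F_{T,c,A}$ between sets of the form $F_{T\mp1,c\mp\varepsilon_\ell,A'}$ for $A'$ close to $A$, and the ``verbatim'' repetition of the transfer breaks down. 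Making this approach work directly would require cutting out a neighborhood of the singular locus of $\pi$ and a further limiting argument --- precisely the complication the paper avoids.

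What the paper actually does is qualitatively different at this point: it keeps the \emph{same} full-measure set of $k\in K$ produced by the proof of Theorem~\ref{thmschforKlat} (those satisfying \eqref{eqFTcdeltaksim}), notes that for each such $k$ the time averages along $g_t\Delta_k$ converge to $\int_X\widehat{f}_{r,c}\,d\mu$, and then \emph{upgrades} this to convergence for $\widehat{f}_{r,c,A}$. The upgrade is the content of Lemma~\ref{lemfrcAhataecont} (a.e.\ continuity of $\widehat{f}_{r,c,A}$ on $X$) and the weak-$\ast$ convergence results Lemma~\ref{lemconvmeasi} and Corollaries~\ref{corconvmeasi}--\ref{corconvmeasii}, which use $0\le\widehat{f}_{r,c,A}\le\widehat{f}_{r,c}$ to control escape of mass. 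So the boundary-measure-zero hypothesis on $A$ is not needed for Siegel's formula (which only requires $L^1$, as your argument would also show), but rather to make $\widehat{f}_{r,c,A}$ a.e.\ continuous so that this weak-$\ast$ upgrade applies. That upgrade is the missing ingredient in your proposal.
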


We prove spiraling for $\mathbb{S}^n$ with the help of the following theorem:

\begin{theorem}\label{thmspiralingwithFE}
    Let $A \subseteq \mathbb{S}^{n-1}$ be a measurable set with boundary measure zero, then for a.e.\ $k \in K$
    \begin{gather*}
        \#(F_{T, c, A}(\Delta_k)) \thicksim |F_{T, c, A}| \ \text{as} \ T \to \infty, \\
        \#(E_{T, c, A}(\Delta_k)) \thicksim |E_{T, c, A}| \ \text{as} \ T \to \infty.
    \end{gather*}
\end{theorem}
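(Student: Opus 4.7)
\emph{The plan} is to mirror the proofs of Theorems~\ref{thmschforlat} and~\ref{thmschforKlat}, exploiting the fact that the direction constraint $\pi(x) \in A$ is essentially free: $\pi$ depends only on $(x_1,\ldots,x_n)$ and $\{g_t\}$ fixes these coordinates pointwise, so $\pi \circ g_t = \pi$ wherever $\pi$ is defined. Since $g_{-t}$ multiplies $x_{n+1}+x_{n+2}$ by $e^t$ while leaving $x_{n+2}^2 - x_{n+1}^2$ and $\pi$ invariant, the region
$$B := \{x \in \mathscr{L}: x_{n+2}^2 - x_{n+1}^2 < c^2,\ 1 \leq x_{n+1}+x_{n+2} < e,\ \pi(x) \in A\}$$
serves as a single fundamental slice: $F_{T,c,A} = \bigsqcup_{j=0}^{\lfloor T\rfloor - 1} g_{-j}B$ up to a tail of $\lambda$-volume $O(1)$. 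By $G$-invariance of $\lambda$ this gives $|F_{T,c,A}| = \lfloor T\rfloor\,\lambda(B) + O(1)$ and
$$\#(F_{T,c,A}(\Lambda)) = \sum_{j=0}^{\lfloor T\rfloor - 1} \widehat{\chi_B}(g_j \Lambda) + O(1).$$

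\emph{Next}, because $\partial A$ has $(n-1)$-dimensional measure zero in $S^{n-1}$, $\chi_B$ is Riemann-integrable on $\mathscr{L}$, so Siegel's formula (Theorem~\ref{thmsmvt}) yields $\widehat{\chi_B} \in L^1(X,\mu)$ with integral $\lambda(B) < \infty$. The $\{g_t\}$-flow on $(X,\mu)$ is ergodic — in fact mixing, by Howe-Moore — so Birkhoff's pointwise ergodic theorem applied to the $g_1$-action gives $\tfrac{1}{T}\#(F_{T,c,A}(\Lambda)) \to \lambda(B)$, equivalently $\#(F_{T,c,A}(\Lambda)) \sim |F_{T,c,A}|$, for $\mu$-a.e. $\Lambda \in X$.

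\emph{The hard part} is upgrading this $\mu$-a.e. statement to a $\sigma$-a.e. statement for the $K$-orbit of the fixed lattice $\Delta$, since $k \mapsto k\Delta$ typically pushes $\sigma$ to a measure singular with respect to $\mu$, and Fubini alone does not suffice. I would mirror the strategy used for Theorem~\ref{thmschforKlat} (in the spirit of~\cite{alamghosh20}): truncate $\widehat{\chi_B}$ at height $L$ to obtain bounded approximants $\widehat{\chi_B}^{\leq L}$; use exponential mixing of $g_t$ on $L^2(X,\mu)$ to bound the $K$-variance
$$\int_K \left|\frac{1}{N}\sum_{j=0}^{N-1} \widehat{\chi_B}^{\leq L}(g_j k\Delta) - \int_X \widehat{\chi_B}^{\leq L}\,d\mu\right|^2 d\sigma(k) = O_L(N^{-1});$$
apply Borel-Cantelli along a polynomially growing subsequence to promote this mean convergence to $\sigma$-a.e. pointwise convergence for the truncated sums; and let $L \to \infty$, controlling the remainder via $\int_X (\widehat{\chi_B} - \widehat{\chi_B}^{\leq L})\,d\mu \to 0$ from Siegel. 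Monotonicity of the nonnegative Birkhoff sum fills in the values between successive terms of the subsequence. The direction constraint affects only the base $B$ and does not interfere with any of these estimates.

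\emph{Finally}, the asymptotic for $\#(E_{T,c,A}(\Delta_k))$ is obtained from the sandwich $F_{T-r_0, c_\ell} \setminus C_\ell \subseteq E_{T,c} \setminus C_0 \subseteq F_{T+r_0, c}$ announced before Theorem~\ref{thmschforsn}: intersecting each side with $\pi^{-1}(A)$, which is $g_t$-invariant and unaffected by removing compact sets, gives the corresponding sandwich for $E_{T,c,A}$. Combining this with the $F$-result above, letting $c_\ell \uparrow c$ (noting $|F_{T,c_\ell,A}|/|F_{T,c,A}| \to 1$), and observing that $\#(C_\ell(\Delta_k)) = O(1)$ for $\sigma$-a.e. $k$ by compactness of $C_\ell$ and finiteness of the Siegel transform a.e., yields $\#(E_{T,c,A}(\Delta_k)) \sim |E_{T,c,A}|$ and completes the proof.
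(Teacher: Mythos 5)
Your reduction of $F_{T,c,A}$ to discrete Birkhoff sums of $\widehat{\chi_B}$, the observation that $\pi$ is $g_t$-invariant, Riemann integrability of $\chi_B$, and the final $E$-sandwich are all fine. But the route you take for the central difficulty --- transferring $\mu$-a.e.\ genericity to $\sigma$-a.e.\ genericity on the $K$-orbit --- is not the route the paper takes, despite your claim to be ``mirroring the strategy used for Theorem~\ref{thmschforKlat}.'' That theorem is proved by a soft argument: Proposition~\ref{propfubinitype} uses the Iwasawa coordinates and Fubini to show that for a.e.\ $(u_{\bm{y}}, g_s)$ near the identity, a.e.\ $k$ makes $u_{\bm{y}}g_s\Delta_k$ Birkhoff generic; one then takes a sequence $(u_{\bm{y}_\ell}, g_{s_\ell}) \to e$ and recovers the asymptotics for $\Delta_k$ itself via the perturbation-sandwich $F_{T-1,c-\varepsilon_\ell} \smallsetminus \mathcal{D}_1^{(\ell)} \subseteq u_{\bm{y}_\ell}g_{s_\ell}(F_{T,c}\smallsetminus\mathcal{D}_2^{(\ell)}) \subseteq F_{T+1,c+\varepsilon_\ell}$. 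No mixing rates, no Borel--Cantelli. Moreover, for the present theorem the paper does not even redo that Fubini argument for $\widehat{f}_{r,c,A}$: it takes the $\sigma$-a.e.\ convergence for the \emph{dominating} function $\widehat{f}_{r,c}$ (already established in~\eqref{eqFTcdeltaksim}), observes $0 \leq \widehat{f}_{r,c,A} \leq \widehat{f}_{r,c}$ with $\widehat{f}_{r,c,A}$ a.e.\ continuous (Lemma~\ref{lemfrcAhataecont}), and promotes convergence of the empirical measures $\frac{1}{T}\int_0^T \delta_{g_t\Delta_k}\,\d t$ against $\widehat{f}_{r,c,A}$ via a weak-$\ast$ convergence plus tightness argument (Lemma~\ref{lemconvmeasi}--Corollary~\ref{corconvmeasii}). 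This soft transfer is the whole point of the section and you do not use it.

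Your alternative route, while plausible in spirit, has real gaps as written. The displayed variance bound $\int_K |\cdot|^2\,\d\sigma(k) = O_L(N^{-1})$ is \emph{not} a direct consequence of exponential mixing of $g_t$ on $L^2(X,\mu)$: the integral is over $K$ with $\sigma$, which pushes forward to a measure on $X$ singular with respect to $\mu$. Expanding the square produces correlations $\int_K \widehat{\chi_B}^{\leq L}(g_ik\Delta)\widehat{\chi_B}^{\leq L}(g_jk\Delta)\,\d\sigma(k)$, and to evaluate these one needs effective equidistribution of translated $K$-orbits $g_t K\Delta$ toward $\mu$ --- a separate theorem (usually derived from mixing via a thickening argument), not mixing itself; moreover your truncations $\widehat{\chi_B}^{\leq L}$ are bounded but not in any Sobolev space, so yet another smoothing approximation is needed before any mixing estimate applies, and the error in that smoothing must also be controlled along the $K$-orbit. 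None of this is filled in, and the intermediate theorems you'd be relying on are themselves nontrivial. It would be cleaner, and closer to the paper, to start from the already-proved $\sigma$-a.e.\ Birkhoff genericity with respect to $\widehat{f}_{r,c}$ and deduce the $\widehat{f}_{r,c,A}$-statement by domination and Riemann integrability, exactly as the paper does.
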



\section{Sandwiching $E_{T, c, A}$, and volume of $F_{T, c, A}$ and $E_{T, c, A}$}\label{secvolcomp}
\subsection{Sandwiching $E_{T, c, A}$}\label{}
Let $C_0 = \{\bm{x} \in \mathscr{L} : x_{n+2} \leq c^2+1\}$ and $\bm{x} \in E_{T, c, A} \smallsetminus C_0$.
Then
\[
    2x_{n+2}(x_{n+2} - x_{n+1}) < c^2 \ \text{and} \ 1 \leq x_{n+2} \implies x_{n+2} - x_{n+1} \leq c^2,
\]
hence
\[
    x_{n+2} + x_{n+1} \geq 2x_{n+2} - c^2 \geq 1.
\]
Also
\[
    x_{n+1} \leq x_{n+2} \implies x_{n+2} + x_{n+1} \leq 2x_{n+2} < 2 \cosh T < e^{T+r_0} \ \text{for some constant} \ r_0.
\]
Finally
\[
    x_{n+1} \leq x_{n+2} \implies x_{n+2}^2 - x_{n+1}^2 = (x_{n+2} + x_{n+1})(x_{n+2} - x_{n+1}) \leq 2x_{n+2}(x_{n+2} - x_{n+1}) < c^2.
\]
Therefore
\[
    E_{T, c, A} \smallsetminus C_0 \subseteq F_{T+r_0, c, A}.
\]

For integers $\ell > c^2 + 1$, let $c_\ell := c \cdot {\left(1 - \frac{c^2}{2\ell}\right)}^{1/2}$ and $C_\ell := \{\bm{x} \in \mathscr{L} : x_{n+2} \leq \ell\}$.
Clearly $c_\ell \uparrow c$ as $\ell \to \infty$.
Let $T \geq r_0$ and $\bm{x} \in F_{T - r_0, c_\ell, A} \smallsetminus C_\ell$.
Then
\[
    x_{n+2}^2 - x_{n+1}^{2} < c_{\ell}^{2} \ \text{and} \ 1 \leq x_{n+2} + x_{n+1} \implies x_{n+2} - x_{n+1} < c^2,
\]
hence
\[
    2x_{n+2} < x_{n+2} + x_{n+1} + c^2 < e^{T - r_0} + c^2 \leq 2\cosh (T - r_0) + c^2 \leq 2\cosh T
\]
($r_0$ could be chosen to satisfy both these inequalities).
From $x_{n+2} - x_{n+1} < c^2$ we again get that
\[
    x_{n+2} + x_{n+1} \geq 2x_{n+2} - c^2 \geq 2x_{n+2}{\left(1 - \frac{c^2}{2\ell}\right)},
\]
hence
\[
    2x_{n+2}(x_{n+2} - x_{n+1}) \leq {\left(1 - \frac{c^2}{2\ell}\right)}^{-1}(x_{n+2}^2 - x_{n+1}^2) < c_\ell^2 \cdot {\left(1 - \frac{c^2}{2\ell}\right)}^{-1} = c^2.
\]
Therefore for all sufficiently large integers $\ell$
\begin{equation}\label{eqETcAsandwiched}
    F_{T - r_0, c_\ell, A} \smallsetminus C_\ell \subseteq E_{T, c, A} \smallsetminus C_0 \subseteq F_{T + r_0, c, A}.
\end{equation}

\subsection{The volume of $F_{T, c, A}$}\label{ssecFTcvol}
We prove that $|F_{T, c, A}|$ is proportional to $T$.
It suffices to show that
\begin{equation}\label{eqFTcAvol}
    r \cdot |F_{T, c, A}| =  T \cdot |F_{r, c, A}|.
\end{equation}
For $T, r > 0$ such that $T/r = \ell \in \mathbb{Z}_+$, we have
\[
    F_{T, c, A} = \bigsqcup_{j = 0}^{\ell - 1} g_{-rj}(F_{r, c, A}),
\]
and hence~\eqref{eqFTcAvol} follows for $T/r \in \mathbb{Z}_+$, since $\{g_t\}$ preserves $\lambda$.
From this one deduces~\eqref{eqFTcAvol} for $T/r \in \mathbb{Q}$.
Finally for arbitrary $T, r > 0$ let $T_1, T_2 > 0$ be such that $T_1 < T < T_2$ and $T_1/r, T_2/r \in \mathbb{Q}$.
Then
\begin{gather*}
    F_{T_1, c, A} \subseteq F_{T, c, A} \subseteq F_{T_2, c, A} \\
    \implies T_1 \cdot |F_{r, c, A}| \leq r \cdot |F_{T, c, A}| \leq T_2 \cdot |F_{r, c, A}|.
\end{gather*}
Hence we get~\eqref{eqFTcAvol} for all $T, r > 0$ by taking limits.

Note that $|F_{T, c, A}| = |F_{T, c}| \cdot \vol(A)$, where $\vol$ denotes the normalized probability measure on $\mathbb{S}^{n-1}$, and $\eta(c) = |F_{1, c}|$ in Theorem~\ref{thmintropaper}.

\subsection{The volume of $E_{T, c, A}$}\label{ssecasympETcvol}
We want to prove that
\begin{equation}\label{eqETcAasymp}
    |E_{T, c, A}| \thicksim |F_{T, c, A}| \ \text{as} \ T \to \infty.
\end{equation}
From~\eqref{eqETcAsandwiched} and~\eqref{eqFTcAvol} it follows that
\begin{equation}\label{eqETcAvol1}
    \limsup_{T \to \infty} \frac{|E_{T, c, A}|}{T} \leq \limsup_{T \to \infty} \frac{|F_{T, c, A}|}{T} = |F_{1, c, A}|.
\end{equation}
Also from~\eqref{eqETcAsandwiched} and~\eqref{eqFTcAvol} it follows that for all integers $\ell$ sufficiently large
\begin{equation}\label{eqETcAvol2}
    \liminf_{T \to \infty} \frac{|E_{T, c, A}|}{T} \geq \liminf_{T \to \infty} \frac{|F_{T - r_0, c_\ell, A}|}{T} = |F_{1, c_\ell, A}|.
\end{equation}
Since $c \mapsto |F_{1, c, A}|$ is a continuous function and $c_\ell \uparrow c$ as $\ell \to \infty$, \eqref{eqETcAvol2} implies that
\begin{equation}\label{eqETcAvol3}
    \liminf_{T \to \infty} \frac{|E_{T, c, A}|}{T} \geq |F_{1, c, A}|.
\end{equation}
From~\eqref{eqFTcAvol},~\eqref{eqETcAvol1} and~\eqref{eqETcAvol3},~\eqref{eqETcAasymp} follows.
\newline


All results in this section are true for whole of $E_{T, c}$ and $F_{T, c}$ as well.
Although
\[
    E_{T, c, \mathbb{S}^{n-1}} \subsetneq E_{T, c} \qquad \text{and} \qquad  F_{T, c, \mathbb{S}^{n-1}} \subsetneq F_{T, c},
\]
these sets are equal modulo measure zero sets.

\section{Proof of Theorem~\ref{thmschforlat} and Corollary~\ref{corETcasympforlat}}\label{secpfsforlats}
We will need an analogue of Siegel's mean value theorem for the positive light-cone $\mathscr{L}$, this is a theorem about the average number of lattice points in a given subset of $\mathscr{L}$.

\begin{theorem}[Siegel's integral formula for the light-cone]\label{thmsmvt}
    The measure $\lambda$ can be suitably normalized so that, $\forall f \in L^1(\mathscr{L}, \lambda)$
    \[
        \int_{X} \widehat{f} \d{\mu} = \int_{\mathscr{L}} f \d{\lambda}.
    \]
\end{theorem}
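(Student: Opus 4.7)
The plan is the classical Siegel--Weil strategy, adapted to the transitive $G$-space $\mathscr{L} \smallsetminus \{\bm{0}\}$: build a $G$-invariant positive linear functional out of the Siegel transform, then invoke uniqueness of the invariant measure.

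\textbf{Step 1 (Invariance).} For a non-negative Borel function $f$ on $\mathscr{L}$, set $I(f) := \int_X \widehat{f} \d{\mu} \in [0,\infty]$. For $h \in G$ and $(h.f)(\bm{v}) := f(h^{-1}\bm{v})$ a direct calculation yields $\widehat{h.f}(\Lambda) = \widehat{f}(h^{-1}\Lambda)$, and the left $G$-invariance of $\mu$ then gives $I(h.f) = I(f)$. Linearity and positivity of $I$ are immediate from the definition of $\widehat{f}$.

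\textbf{Step 2 (Transitivity and uniqueness).} Since $G = \mathrm{SO}(\mathcal{Q})^\circ$ acts transitively on $\mathscr{L} \smallsetminus \{\bm{0}\}$ with stabilizer $Q = NM$ at $\bm{e}_1$, we have $\mathscr{L} \smallsetminus \{\bm{0}\} \cong G/Q$. Because $Q$ is unimodular (its Haar measure $\d{\bm{y}}\d{m}$ is already recorded in the excerpt), this quotient carries a unique $G$-invariant Radon measure up to positive scalar, namely $\lambda$. Hence, once we know $I$ restricts to a Radon measure on $C_c(\mathscr{L} \smallsetminus \{\bm{0}\})$, Step 1 forces a constant $c \in [0,\infty)$ with $I(f) = c \int_{\mathscr{L}} f \d{\lambda}$ for every compactly supported non-negative continuous $f$.

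\textbf{Step 3 (Finiteness).} The serious point is to verify $\widehat{f} \in L^1(X, \mu)$ for compactly supported continuous $f$ vanishing near $\bm{0}$. The Iwasawa formula $\d{\mu(g)} = \nu_\Gamma^{-1} e^{-nt} \d{\bm{y}}\d{t}\d{\sigma(k)}$ shows that the cusp weight decays like $e^{-nt}$, while a geometric count on the light cone shows that the number of vectors of $g\Lambda_0$ meeting a fixed bounded region of $\mathscr{L}$ grows at most polynomially in $e^t$ along the cusp; the exponential decay beats the polynomial growth and yields the required integrability. This is precisely the content of Weil's general theorem in~\cite{weil46}, which we invoke.

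\textbf{Step 4 (Non-triviality and normalization).} To see $c > 0$, choose a small compactly supported $f \geq 0$ with $f(\bm{v}_0) > 0$ for some $\bm{v}_0 \in \Lambda_0 \smallsetminus \{\bm{0}\}$; then $\widehat{f}(\Lambda_0) > 0$, so by lower semicontinuity $\widehat{f}$ is positive on an open set of positive $\mu$-measure, forcing $I(f) > 0$ while $\int_{\mathscr{L}} f \d{\lambda} > 0$. Absorbing $c$ into the Haar normalization on $Q$ (equivalently, rescaling $\lambda$) gives the desired identity for $f \in C_c(\mathscr{L} \smallsetminus \{\bm{0}\})$. Monotone convergence, together with $\lambda(\{\bm{0}\}) = 0$, then extends the formula to all non-negative Borel $f$, and splitting into positive and negative parts gives it for every $f \in L^1(\mathscr{L}, \lambda)$. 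The main obstacle is Step~3 --- the cusp integrability of $\widehat{f}$ --- because $\Gamma$ is non-uniform; the cleanest route is the citation to Weil, as a self-contained treatment would require an explicit reduction theory for $\Gamma$ inside $\mathrm{SO}(n+1,1)^\circ$.
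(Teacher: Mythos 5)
The paper provides no proof of this statement --- it is recorded as a known result, with the surrounding text explicitly attributing it to a more general theorem of Weil~\cite{weil46}. Your proposal lays out the standard Siegel--Weil strategy: equivariance of the Siegel transform under $G$, transitivity of $G$ on $\mathscr{L}\smallsetminus\{\bm 0\} \cong G/Q$ with $Q = NM$ unimodular (hence a unique invariant Radon measure up to scalar), positivity and non-triviality of the resulting functional, and absorption of the constant into the normalization of $\lambda$; the serious technical step, $L^1$-integrability of $\widehat f$ over the cusp, is delegated to Weil. Since both you and the paper ultimately outsource the hard estimate to~\cite{weil46}, the two treatments are essentially the same, with your write-up simply making the surrounding structure explicit. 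This is sound.

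One caution on your Step 3 heuristic ``exponential decay beats polynomial-in-$e^t$ growth.'' The \emph{pointwise} growth of $\widehat f$ into the cusp is genuinely exponential: for $\Lambda = g_t\Lambda_0$, the progression $e^{-t}\mathbb{Z}_+\bm{e}_1 \subset g_t\Lambda_0$ alone already puts $\asymp e^t$ points into a fixed compact subset of $\mathscr{L}$ once the contracted ray meets it, so the naive pointwise product $e^{-nt}\cdot e^t$ is not integrable in $t$ when $n=1$ (and borderline when $n=2$), yet the theorem is asserted for all $n\geq 1$. What actually produces integrability is the average over the $K$- and bounded $N$-coordinates in a Siegel set: for generic $(\bm y, k)$ the contracted ray misses $\supp f$, and one checks (integrating over $K$ first) that the $K$-average of $\widehat f(u_{\bm y}g_t k\Lambda_0)$ grows much slower than $e^t$, which is then easily killed by $e^{-nt}$. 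This dilution is precisely what Weil's reduction-theoretic argument quantifies, so your citation in Step 3 is carrying real weight and should not be presented as if the pointwise bound already closes the estimate.
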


Note that the Theorem above, as a consequence, says that $\widehat{f} \in L^1(X, \mu)$. The Theorem above is due to Shucheng Yu and can be assembled from his thesis \cite{shucheng18}. In order to keep this paper self-contained, we provide a proof below by gathering all the necessary facts from \cite{shucheng18}.
\subsection*{Preliminaries for the above proof:}\label{ssecyudeets}
An important ingredient of the proof is to realize $\widehat{f}$ as the sum of incomplete Eisenstein series for certain dilations of the function $f$.
Each of these incomplete Eisenstein series is associated to a cusp of $G_\mathbb{Z}$.

We say that two parabolic subgroups of $G$ are $G_\mathbb{Z}$-equivalent if they are conjugate under $G_\mathbb{Z}$.
A \emph{cusp} of $G_\mathbb{Z}$ is a $G_\mathbb{Z}$-equivalent class of parabolic subgroups of $G$ whose unipotent radicals intersect $G_\mathbb{Z}$ nontrivially.
There are finitely many such parabolic subgroups, i.e., cusps, and we denote them by $P_1, \ldots, P_h$.
For example, $P = NAM$ is the unique parabolic subgroup containing $N$ as its unipotent radical associated to the Iwasawa decomposition of $G$ discussed in \S\ref{secmain}.
This decomposition $P = NAM$ is a Langlands decomposition of $P$.

Note that $G$ naturally acts on the space of parabolic subgroups by conjugation: $g \in G$ sends a parabolic subgroup $P'$ to $gP'g^{-1}$.
Under this action $P$ is its own stabilizer, so the space of parabolic subgroups can be identified with the homogeneous space $G/P$. Moreover,  
$G/P$ can further be identified with $K/M$.
Thus we can take $\{\xi_1, \ldots, \xi_h\} \subseteq K$ such that $\xi_jP_j\xi_j^{-1} = P$ for all $1 \leq j \leq h$.
Each $P_j$ thus has a Langlands decomposition $P_j = N_jA_jM_j$, where $N_j = \xi_j^{-1}N\xi_j$, $A_j = \xi_j^{-1}A\xi_j$ and $M_j = \xi_j^{-1}M\xi_j$.
For each $1 \leq j \leq h$ let $Q_j := N_jM_j$, $\Gamma_{P_j} := G_\mathbb{Z} \cap P_j$ and $\Gamma_{N_j} := G_\mathbb{Z} \cap N_j$.
By definition of cusps, each $\Gamma_{N_j}$ is nontrivial.

We now record a few facts about these groups.
Since $G_\mathbb{Z}$ is a lattice in $G$, $\Gamma_{N_j}$ is a lattice in $N_j$.
Furthermore, the discreteness of $\Gamma$ and nontriviality of $\Gamma_{N_j}$ implies that $\Gamma_{P_j} \subseteq Q_j$, since otherwise there will be a sequence of non-identity elements of $\Gamma$ converging to the identity element.
After identifying $N_j$ with $\mathbb{R}^{d}$ and $\Gamma_{N_j}$ with a lattice $L$ in $\mathbb{R}^{d}$, the action of $\Gamma_{P_j}$ on $N_j$ and $\Gamma_{N_j}$ by conjugation gives us an injection
\[
    \Gamma_{P_j}/\Gamma_{N_j} \hookrightarrow \SO(\mathbb{R}^{n}) \cap \GL(L).
\]
Thus the index of $\Gamma_{N_j}$ in $\Gamma_{P_j}$, $[\Gamma_{P_j} : \Gamma_{N_j}]$, is finite.

Using conjugation, we see that $\displaystyle\int_{N/\xi_j\Gamma_{N_j}\xi_j^{-1}} \d{\bm{y}}$ is the same as $\vol(N_j/\Gamma_{N_j})$, the covolume of the lattice $\Gamma_{N_j}$ in $N_j$.
Similarly,
\[
    \omega_j := \int_{Q/\xi_j\Gamma_{P_j}\xi_j^{-1}} \d{q}
\]
denotes the covolume of $\Gamma_{P_j}$ inside $Q_j$ (note that $\Gamma_{P_j} \subseteq Q_j$ is co-compact).
Using the decomposition $Q_j/\Gamma_{N_j} = N_j/\Gamma_{N_j} \times M_j$ and the fact that $\d{m}$ is the probability measure on $M$, we remark that
\[
    \omega_j = \frac{1}{[\Gamma_{P_j} : \Gamma_{N_j}]} \int_{Q/\xi_j\Gamma_{N_j}\xi_j^{-1}} \d{q} = \frac{\vol(N_j/\Gamma_{N_j})}{[\Gamma_{P_j} : \Gamma_{N_j}]}.
\]

We recall from \S\ref{secmain} that $G$ acts on $\mathscr{L}$ by left-multiplication with the stabilizer of $\bm{e}_1$ being $Q = NM$, and that we have the identification of $\mathscr{L}$ with $G/Q$ using the map $gQ \mapsto g\bm{e}_1$.
Define the incomplete Eisenstein series for a bounded compactly supported function on $\mathscr{L} = G/Q$ as
\[
    \Theta_{f}^{j}(g) := \sum_{\gamma \in G_\mathbb{Z}/\Gamma_{P_j}} f(g\gamma\xi_j^{-1}).
\]
Then $\Theta_{f}^{j}$ can be shown to be well-defined as a sum over the quotient space $G_\mathbb{Z}/\Gamma_{P_j}$ due to the right $Q$-invariance of $f$.
Furthermore, this definition is independent of the choice of $\{\xi_j\}$.
See \cite[page 28]{shucheng18} for details.
We note that these $\Theta_{f}^{j}$ induce functions on $G/G_\mathbb{Z}$.
We now restate a lemma and a remark from \cite{shucheng18} that go into the proof of Theorem \ref{thmsmvt}.
Let $f = \mychi_{B}$ be the characteristic function of a bounded measurable subset $B$ of $\mathscr{L}$.
Recall from \S\ref{secmain} the definition of $\nu_\Gamma$ and that $\Lambda_0 = \mathscr{L} \cap \mathbb{Z}_{\operatorname{pr}}^{n+2}$.

\begin{lemma}[Lemma 6.2.4 of \cite{shucheng18}]\label{lemyu1}
    There exists some positive constants $\kappa_1 = 1 , \kappa_2, \ldots, \kappa_h$ independent of $B$ such that for any $\Lambda = g\Lambda_0 \in G/G_\mathbb{Z}$ 
    \[
        \widehat{f}(\Lambda) = \sum_{j = 1}^{h} \Theta_{f_j}^{j}(g),
    \]
    where $f_j = \mychi_{\kappa_jB}$ the characteristic function of the dilation $\kappa_jB$ of $B$.
\end{lemma}

\begin{lemma}[Remark 4.1.3 of \cite{shucheng18}]\label{lemyu2}
    For any bounded compactly supported function $f$ on $\mathscr{L}$ the first moment of $\Theta_{f}^{j}$ is given as follows:
    \[
        \int_{G/G_\mathbb{Z}} \Theta_{f}^{j}(g) \d{\mu(g)} = \frac{\omega_j}{\nu_\Gamma} \int_{\mathscr{L}} f \d{\widetilde{\lambda}}.
    \]
\end{lemma}

This ends the preliminary discussion needed for the proof of Theorem \ref{thmsmvt}.


\vspace{1.5em}

\begin{proof}[Proof of Theorem \ref{thmsmvt}]
    Here we shall provide a proof for the case when $f = \mychi_B$ is the characteristic function of a bounded measurable subset $B$ of $\mathscr{L}$; the general case follows from this.

    From Lemma \ref{lemyu1} we see that
    \[
        \int_{G/G_\mathbb{Z}} \widehat{f} \d{\mu} = \sum_{ j = 1}^{h} \int_{G/G_\mathbb{Z}} \Theta^{j}_{f_j}(g) \d{\mu}.
    \]

    Applying Lemma \ref{lemyu2} we find
    \[
        \int_{G/G_\mathbb{Z}} \widehat{f} \d{\mu} = \frac{1}{\nu_\Gamma} \sum_{j = 1}^{h} \omega_j \widetilde{\lambda}(\kappa_jB).
    \]
    Using the identification of $\mathscr{L} = G/Q$ with $K/M \times A$ and the explicit description of $\widetilde{\lambda}$ from \S\ref{secmain}, one can see that $\widetilde{\lambda}(\kappa_jB) = \kappa_j^n\widetilde{\lambda}(B)$.
    Thus
    \[
        \int_{G/G_\mathbb{Z}} \widehat{f} \d{\mu} = \frac{1}{\nu_\Gamma} \sum_{j = 1}^{h} \omega_j\kappa_j^n \widetilde{\lambda}(B) = \kappa \widetilde{\lambda}(B),
    \]
    where $\kappa = \frac{1}{\nu_\Gamma} \sum_{j = 1}^{h} \omega_j\kappa_j^n > 0$.
    Finally, we find that
    \[
        \int_{G/\Gamma} \widehat{f} \d{\mu} = \frac{1}{[\Gamma : G_\mathbb{Z}]} \int_{G/G_\mathbb{Z}} \widehat{f} \d{\mu} = \frac{\kappa}{[\Gamma : G_\mathbb{Z}]} \widetilde{\lambda}(B) = \int_{\mathscr{L}} f \d{\lambda},
    \]
    where $\lambda = \dfrac{\kappa}{[\Gamma : G_\mathbb{Z}]} \widetilde{\lambda}$.
    Using standard measure theoretic techniques one can see that the above theorem holds for all $f \in L^1(\mathscr{L}, \lambda)$.

\end{proof}

\vspace{1.5em}

\subsection*{Some estimates about $F_{T, c}$:}
For $\bm{x} \in \mathscr{L}$ write $[\bm{x}] = x_{n+2} + x_{n+1}$.
Then note that $[g_t\bm{x}] = e^{-t}[\bm{x}]$.
Let $f_{r, c}$ be the characteristic function of $F_{r, c}$, then we have
\[
    f_{r, c}(g_t\bm{x}) =
    \begin{dcases}
        1 & \text{if} \ e^t \leq [\bm{x}] < e^{t+r} \ \text{and} \ x_{n+2}^{2} - x_{n+1}^{2} < c^2, \\
        0 & \text{otherwise.} \ 
    \end{dcases}
\]
Therefore, for $T > r$
\begin{gather*}
    \bm{x} \in F_{T, c} \implies |\{t \in [0, T] : g_t\bm{x} \in F_{r, c}\}| \leq r, \\
    \bm{x} \in F_{T, c} \smallsetminus F_{r, c} \implies |\{t \in [0, T] : g_t\bm{x} \in F_{r, c}\}| = r,
\end{gather*}
and
\[
    g_t\bm{x} \in F_{r, c} \ \text{for some} \ t \in [0, T] \implies \bm{x} \in F_{T+r, c}.
\]
Using~\eqref{eqsiegeltrans} and changing the order of summation and integration it follows that for any $\Lambda \in X$ and any $T > r$
\begin{equation}\label{eqsandwichFfhat1}
    \#(\Lambda \cap (F_{T, c} \smallsetminus F_{r, c})) \leq \frac{1}{r} \int_{0}^{T} \widehat{f}_{r, c}(g_t\Lambda) \d{t} \leq \#(\Lambda \cap F_{T+r, c}).
\end{equation}
\eqref{eqsandwichFfhat1} further implies
\begin{equation}\label{eqsandwichFfhat2}
    \frac{1}{r} \int_{0}^{T-r} \widehat{f}_{r, c}(g_t\Lambda) \d{t} \leq \#(F_{T, c}(\Lambda)) \leq \frac{1}{r} \int_{0}^{T} \widehat{f}_{r, c}(g_t\Lambda) \d{t} + \#(F_{r, c}(\Lambda)).
\end{equation}

Setting $f_{r, c, A}$ to be the characteristic function of $F_{T, c, A}$, by similar arguments we see that for any $\Lambda \in X$
\begin{equation}\label{eqsandwichFfhatA}
    \frac{1}{r} \int_{0}^{T-r} \widehat{f}_{r, c, A}(g_t\Lambda) \d{t} \leq \#(F_{T, c, A}(\Lambda)) \leq \frac{1}{r} \int_{0}^{T} \widehat{f}_{r, c, A}(g_t\Lambda) \d{t} + \#(F_{r, c, A}(\Lambda)).
\end{equation}


\begin{theorem}[Application of Moore's ergodicity theorem; \cite{moore66}, Theorem 3]\label{thmmoore}
    The action of $\{g_t\}$ on $X$ is ergodic with respect to the Haar measure $\mu$.
\end{theorem}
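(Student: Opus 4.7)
The plan is to deduce the ergodicity of the $\{g_t\}$-action on $X = G/\Gamma$ from the classical Howe--Moore theorem on decay of matrix coefficients, applied to the group $G = \mathrm{SO}(n+1,1)^\circ$. The setup is well-suited for this: $G$ is a connected simple Lie group with finite center (since $n \geq 1$), $\Gamma$ is a lattice, and $\{g_t\}$ is a non-compact one-parameter subgroup sitting inside a Cartan subgroup, whose elements leave every compact set in $G$ as $|t| \to \infty$.

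First I would pass to the Koopman representation of $G$ on $L^2(X,\mu)$ given by $(g \cdot f)(\Lambda) := f(g^{-1}\Lambda)$, and restrict it to the $G$-invariant subspace $\mathcal{H} := L_0^2(X,\mu)$ of functions of zero mean. Ergodicity of $\{g_t\}$ is equivalent to the statement that no non-zero $f \in \mathcal{H}$ is $\{g_t\}$-invariant. Since $G$ acts transitively on $X$, the representation $\mathcal{H}$ contains no non-zero $G$-invariant vector; the Howe--Moore theorem then asserts that $\langle g \cdot f, h \rangle \to 0$ as $g \to \infty$ in $G$ for all $f, h \in \mathcal{H}$. Specialising to $g = g_t$ and $h = f$, if such an $f$ were $\{g_t\}$-invariant the matrix coefficient $\langle g_t \cdot f, f\rangle$ would be identically $\|f\|^2$, forcing $f = 0$. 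Hence every $\{g_t\}$-invariant $L^2$ function on $X$ is constant $\mu$-a.e., which is the desired ergodicity.

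An alternative, essentially equivalent, route is the Mautner phenomenon: since $g_t^{-1} u_{\bm{y}} g_t \to e$ as $t \to +\infty$ for every $u_{\bm{y}} \in N$, any $\{g_t\}$-invariant unit vector in $\mathcal{H}$ is automatically $N$-invariant; a symmetric argument with $t \to -\infty$ shows it is invariant under the expanding horospherical subgroup $N^-$; and since $N$, $A = \{g_t\}$, and $N^-$ together generate $G$ (Bruhat decomposition $N^- A N$ being open dense), the vector is $G$-invariant, hence constant. The main obstacle is the representation-theoretic input itself (Howe--Moore, respectively Mautner), which is the genuinely deep step and would be cited rather than reproved here; the remaining verifications---that $G$ is simple with finite center and that $\{g_t\}$ is unbounded---are routine structural facts about $\mathrm{SO}(n+1,1)^\circ$.
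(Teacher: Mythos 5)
The paper does not actually prove Theorem~\ref{thmmoore}: it is cited as a standard black-box result (Moore's ergodicity theorem / Howe--Moore), and the text moves directly on to Birkhoff's theorem. Your proposal supplies the standard proof, and the Howe--Moore route you lay out is correct: $G = \mathrm{SO}(n+1,1)^\circ$ is connected, simple, with finite center; $\{g_t\}$ is unbounded; $L^2_0(X,\mu)$ has no non-zero $G$-invariant vector because $G$ acts transitively (hence ergodically) on $X$; so decay of matrix coefficients forbids a non-zero $\{g_t\}$-invariant vector, giving ergodicity. This is exactly the argument the authors are implicitly invoking, so there is no conflict with the paper.

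One small remark on your alternative Mautner route: the paper declares $N$ to be the \emph{contracting} horospherical subgroup of $\{g_t\}$, i.e.\ $g_t u_{\bm{y}} g_{-t} \to e$ as $t \to +\infty$. This means $g_t^{-1} u_{\bm{y}} g_t = g_{-t} u_{\bm{y}} g_t \to e$ only as $t \to -\infty$, not $t\to +\infty$ as you wrote. The Mautner argument is unaffected --- you just use $t \to -\infty$ to pick up $N$-invariance and $t \to +\infty$ for $N^-$-invariance, and then the Bruhat cell $N^-AN$ generates $G$ --- but the sign should be flipped to match the paper's convention.
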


\begin{theorem}[Birkhoff's ergodic theorem]\label{thmbirkhoff}
    Let $\{g_t\}$ be an ergodic measure-preserving action on a probability space $(\Omega, \mu)$ and $f \in L^1(\Omega)$.
    Then for almost every $x \in \Omega$, we have
    \begin{equation}\label{eqbet}
        \lim_{T \to \infty} \frac{1}{T} \int_{0}^{T} f(g_t x) \d{t} = \int_{\Omega} f \d{\mu}.
    \end{equation}
\end{theorem}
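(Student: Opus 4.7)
The statement is the classical pointwise ergodic theorem of Birkhoff, so what I can reasonably offer is a sketch along the standard route, emphasising the two points that are specific to the continuous-time setting at hand. The plan is to reduce the continuous flow to its discrete time-one map $T := g_1$ via the auxiliary function $\tilde{f}(x) := \int_0^1 f(g_s x)\, ds$, which lies in $L^1(\Omega,\mu)$ by Fubini. Splitting
$$\frac{1}{R}\int_0^R f(g_t x)\, dt = \frac{1}{R}\sum_{k=0}^{\lfloor R\rfloor - 1} \tilde{f}(T^k x) + \frac{1}{R}\int_{\lfloor R\rfloor}^{R} f(g_t x)\, dt,$$
the tail integral is $o(1)$ almost surely as $R \to \infty$ once the discrete theorem is applied to the nonnegative $L^1$ function $x \mapsto \int_0^1 |f(g_s x)|\, ds$. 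Everything then reduces to the discrete Birkhoff theorem together with ergodicity of $g_1$.

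For the discrete case, I would first establish the \emph{Maximal Ergodic Theorem}: for $h \in L^1(\Omega)$ and $S_n h := \sum_{k=0}^{n-1} h \circ T^k$, one has $\int_{\{\sup_{n \ge 1} S_n h > 0\}} h\, d\mu \ge 0$. The cleanest argument is the short combinatorial ``rising-sun'' proof on the partial sums $S_n h$. Applying this maximal inequality to $f - \beta$, and symmetrically to $\alpha - f$, on the $T$-invariant set $E_{\alpha,\beta} := \{\liminf S_n f/n < \alpha < \beta < \limsup S_n f/n\}$ forces $\mu(E_{\alpha,\beta}) = 0$ for every pair of rationals $\alpha < \beta$; a countable union then gives a.e.\ convergence of $S_n f/n$ to some $T$-invariant $f^* \in L^1$. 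Ergodicity pins down $f^* = \int f\, d\mu$ a.e.; the constant is correctly identified either by truncating $f$ at $\pm N$ (to obtain uniform integrability) and applying dominated convergence, or by Fatou's lemma applied in both directions against bounded approximants.

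The main obstacle specific to the flow setting is verifying ergodicity of the time-one map $g_1$, which is strictly stronger than ergodicity of the flow $\{g_t\}_{t \in \mathbb{R}}$ in general (a flow can be ergodic while its time-one maps are not). In the present context this is not really a problem: Moore's theorem (Theorem~\ref{thmmoore}) in fact yields ergodicity of every nontrivial element of $A$ on $X = G/\Gamma$, since $g_1$ generates an unbounded one-parameter subgroup of $G = \SO(n+1,1)^\circ$, which is a connected simple noncompact Lie group admitting $\Gamma$ as a lattice. For an abstract probability space without such algebraic structure, one would bypass the issue entirely by running the continuous-time version of the maximal ergodic theorem directly, using integrals $\int_0^R$ in place of finite sums; the same ``rising-sun'' idea still works, only with an additional measurability check for the relevant suprema.
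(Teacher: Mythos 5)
The paper does not prove Theorem~\ref{thmbirkhoff}; it is invoked as a classical fact (the pointwise ergodic theorem for flows), so there is no ``paper's own proof'' to compare against. Your sketch is a correct outline of the standard argument: reduce to the time-one map via $\tilde{f}(x)=\int_0^1 f(g_s x)\,ds$, prove the discrete case by the maximal ergodic theorem (rising-sun lemma) and the $E_{\alpha,\beta}$ device, kill the tail $\frac{1}{R}\int_{\lfloor R\rfloor}^{R}$ by noting that $h(T^n x)/n\to 0$ a.e.\ for $h(x)=\int_0^1|f(g_s x)|\,ds$, and identify the constant by truncation or Fatou. One correction, though, on what you call ``the main obstacle'': ergodicity of the time-one map $g_1$ is \emph{not} actually needed, and the standard textbook proof does not use it. Apply the discrete Birkhoff theorem \emph{without} any ergodicity hypothesis to get an a.e.\ limit $\tilde{f}^*$ of the discrete averages, hence of the continuous averages as well. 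Then for any fixed $s$, $\frac{1}{T}\int_0^T f(g_t g_s x)\,dt - \frac{1}{T}\int_0^T f(g_t x)\,dt = \frac{1}{T}\bigl(\int_T^{T+s}-\int_0^s\bigr)f(g_t x)\,dt \to 0$ a.e.\ by the same tail estimate, so $\tilde f^*(g_s x)=\tilde f^*(x)$ a.e.\ for every $s$, i.e.\ $\tilde f^*$ is invariant under the full flow. Ergodicity of the \emph{flow} (exactly the hypothesis in Theorem~\ref{thmbirkhoff}) then forces $\tilde f^*$ to be constant, and the constant is $\int f\,d\mu$ since $\int\tilde f^*=\int\tilde f=\int f$. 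So neither Moore's theorem applied to $g_1$, nor a continuous-time maximal inequality, is required for the reduction to go through; those are alternative routes, not necessities.
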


\begin{remark}\label{rembirkhoffgen}
    $x \in \Omega$ satisfying~\eqref{eqbet} is called \emph{Birkhoff generic} with respect to $f$.
    A proof of the above theorem could be found in \cite{katznelsonweiss82}.
\end{remark}

Hence applying Theorems~\ref{thmmoore} and~\ref{thmbirkhoff} to~\eqref{eqsandwichFfhat2} and using Theorem~\ref{thmsmvt}, we get that for almost every $\Lambda \in X$
\begin{equation}\label{eqschforFandaelat}
    \lim_{T \to \infty} \frac{\#(F_{T, c}(\Lambda))}{T} = \lim_{T \to \infty} \frac{1}{Tr} \int_{0}^{T} \widehat{f}_{r, c}(g_t\Lambda) \d{t} = \frac{1}{r} \int_{X} \widehat{f}_{r, c} \d{\mu} = \frac{1}{r} \int_{\mathscr{L}} f_{r, c} \d{\lambda} = \frac{1}{r} |F_{r, c}|.
\end{equation}
\eqref{eqFTcAvol} shows that~\eqref{eqschforFandaelat} proves Theorem~\ref{thmschforlat}.

\begin{remark}\label{}
    Any $\Lambda$ satisfying~\eqref{eqschforFandaelat} (or~\eqref{eqschforlat}) is Birkhoff generic with respect to $\widehat{f}_{r, c}$.
\end{remark}

\begin{proof}[Proof of Corollary~\ref{corETcasympforlat}]
For any $\Lambda$ satisfying~\eqref{eqschforlat}, using~\eqref{eqETcAsandwiched} we see that for all sufficiently large $\ell$
\begin{equation}\label{eqETclambdasandwiched}
    F_{T - r_0, c_\ell}(\Lambda) \smallsetminus (C_\ell \cap \Lambda) \subseteq E_{T, c}(\Lambda) \smallsetminus (C_0 \cap \Lambda) \subseteq F_{T + r_0, c}(\Lambda).
\end{equation}
Since $C_0$ and $C_\ell$ are compact $\#(C_0 \cap \Lambda), \#(C_\ell \cap \Lambda) < \infty$.
Similar arguments as in \S\ref{ssecasympETcvol} and~\eqref{eqETcAasymp} then proves Corollary~\ref{corETcasympforlat}.


\end{proof}

\section{Proof of Theorems~\ref{thmschforsn} and~\ref{thmschforKlat}}\label{}
Let $U \subseteq N, I \subseteq A, V \subseteq K$ be open sets of finite measure with respect to measures $\d\bm{y}, \d{t}$ and $\sigma$ on $N, A$ and $K$ respectively; further assume that $U$ and $I$ are neighborhoods of identity.
Denote $UIV$ by $W$, which is open and has $\mu$-finite measure.
The following proposition is adapted from \cite[Proposition 4.2]{athreyaparrishtseng16}.

\begin{proposition}\label{propfubinitype}
    Let $W$ be as in the previous paragraph.
    Then for a.e.\ $u_{\bm{y}} \in U$ and a.e.\ $g_t \in I$ there exists a measurable subset $V_{\bm{y}, t} \subseteq V$ such that $\sigma(V_{\bm{y}, t}) = \sigma(V)$ and for every $k \in V_{\bm{y}, t}$, the lattice $u_{\bm{y}}g_t k\Delta = u_{\bm{y}}g_t\Delta_k$ is Birkhoff generic with respect to $\widehat{f}_{r, c}$.
\end{proposition}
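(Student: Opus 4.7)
The plan is to combine the fact that Birkhoff generic lattices form a $\mu$-conull subset of $X$ with a Fubini-type argument against the Iwasawa decomposition of Haar measure on $G$.

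First I would recall from the proof of Theorem~\ref{thmschforlat}, i.e.\ from~\eqref{eqschforFandaelat}, that the set
\[
    \mathcal{B} := \{\Lambda \in X : \Lambda \ \text{is Birkhoff generic with respect to} \ \widehat{f}_{r, c}\}
\]
satisfies $\mu(X \smallsetminus \mathcal{B}) = 0$, since $\widehat{f}_{r, c} \in L^1(X, \mu)$ (by Theorem~\ref{thmsmvt}) and $\{g_t\}$ acts ergodically on $X$ by Theorem~\ref{thmmoore}. The natural projection $\pi : G \to X = G/\Gamma$ is a local homeomorphism that sends Haar measure on $G$ to $\mu$ on $X$ (up to the normalization constant $\nu_\Gamma$). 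Consequently, the pull-back
\[
    \widetilde{\mathcal{B}} := \{g \in G : g\Delta \in \mathcal{B}\}
\]
is a conull subset of $G$ with respect to the Haar measure $\d{\mu(g)} = \frac{1}{\nu_\Gamma} e^{-nt} \d{\bm{y}} \d{t} \d{\sigma(k)}$; equivalently, $\{g \in G : g\Delta \notin \mathcal{B}\}$ is locally Haar-null in $G$.

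Next I would intersect this with the bounded open set $W = UIV$ and apply Fubini's theorem in Iwasawa coordinates. Since $I$ is bounded (being of finite measure and a neighborhood of the identity, we may shrink it if necessary), the factor $e^{-nt}$ is bounded above and below by positive constants on $I$, so the vanishing of
\[
    0 = \int_W \mathbbm{1}_{G \smallsetminus \widetilde{\mathcal{B}}}(g) \d{\mu(g)} = \frac{1}{\nu_\Gamma} \int_U \int_I \int_V \mathbbm{1}_{G \smallsetminus \widetilde{\mathcal{B}}}(u_{\bm{y}} g_t k) e^{-nt} \d{\sigma(k)} \d{t} \d{\bm{y}}
\]
forces, by Fubini, that for a.e.\ $u_{\bm{y}} \in U$ and a.e.\ $g_t \in I$ the inner integral vanishes, i.e.\ $\sigma(\{k \in V : u_{\bm{y}} g_t k \Delta \notin \mathcal{B}\}) = 0$. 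Setting
\[
    V_{\bm{y}, t} := \{k \in V : u_{\bm{y}} g_t k \Delta \in \mathcal{B}\},
\]
this gives $\sigma(V_{\bm{y}, t}) = \sigma(V)$ and, by construction, every $k \in V_{\bm{y}, t}$ yields a lattice $u_{\bm{y}} g_t \Delta_k$ that is Birkhoff generic with respect to $\widehat{f}_{r, c}$.

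The argument is essentially a routine Fubini manipulation; the only conceptual point that needs care is the passage from the $\mu$-null set of non-generic lattices in $X$ to a Haar-null set in $G$, which relies on the product structure of the Iwasawa decomposition and the local homeomorphism property of $\pi : G \to X$. Neither step is delicate, so I do not anticipate any genuine obstacle once the conull set $\mathcal{B}$ has been identified.
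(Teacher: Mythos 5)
Your argument is correct and follows essentially the same route as the paper: lift the $\mu$-conull set of Birkhoff-generic lattices from $X$ to a Haar-conull subset of $G$, then apply Fubini against the Iwasawa-decomposed Haar measure $e^{-nt}\,\d\bm{y}\,\d t\,\d\sigma(k)$ on $W = UIV$. The only difference is presentational --- you directly integrate the indicator of the exceptional set and conclude the inner $k$-integral vanishes a.e., whereas the paper phrases the same Fubini computation as an argument by contradiction; the underlying mechanism is identical.
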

\begin{proof}
    Since $\mu$-a.e.\ element in $X$ is Birkhoff generic with respect to $\widehat{f}_{r, c}$, there exists a set $W_{0} \subseteq W$ such that $g\Delta$ is Birkhoff generic with respect to $\widehat{f}_{r, c}$ for every element $g \in W_0$, and $\mu(W_0) = \mu(W)$.
    For $u_{\bm{y}} \in U$ and $g_t \in I$ define $V_{\bm{y}, t} := \{k \in V : u_{\bm{y}}g_t k \in W_0\}$.
    Then Fubini's theorem implies that $V_{\bm{y}, t}$ is measurable for a.e.\ $u_{\bm{y}} \in U$ and a.e.\ $g_t \in I$.

    We claim that $\sigma(V_{\bm{y}, t}) = \sigma(V)$ for a.e.\ $u_{\bm{y}} \in U$ and a.e.\ $g_t \in I$.
    If not, let $S \subseteq U \times I$ be a positive measure set in $U \times I$ such that for all $(u_{\bm{y}}, g_t) \in S$, $\sigma(V_{\bm{y}, t}) < \sigma(V)$.
    Then integrating using Fubini's theorem
    \begin{align*}
        \nu_\Gamma \cdot \mu(W_0) &= \int_{U \times I} \int_{V} e^{-nt} \mathbbm{1}_{W_0}(u_{\bm{y}}g_t k) \d{\bm{y}}\d{t}\d{\sigma(k)} \\
        &= \int_{U \times I} \int_{V} e^{-nt} \mathbbm{1}_{V_{\bm{y}, t}}(k) \d{\bm{y}}\d{t}\d{\sigma(k)} \\
        &= \int\limits_{U \times I \smallsetminus S} \int_{V} e^{-nt} \mathbbm{1}_{V_{\bm{y}, t}}(k) \d{\bm{y}}\d{t}\d{\sigma(k)} + \int_{S} \int_{V} e^{-nt} \mathbbm{1}_{V_{\bm{y}, t}}(k) \d{\bm{y}}\d{t}\d{\sigma(k)} \\
        &< \int\limits_{U \times I \smallsetminus S} \int_{V} e^{-nt} \mathbbm{1}_{V}(k) \d{\bm{y}}\d{t}\d{\sigma(k)} + \int_{S} \int_{V} e^{-nt} \mathbbm{1}_{V}(k) \d{\bm{y}}\d{t}\d{\sigma(k)} \\
        &= \int_{U \times I} \int_{V} e^{-nt} \d{\bm{y}}\d{t}\d{\sigma(k)} \\
        &= \int_{W} e^{-nt} \d{\bm{y}}\d{t}\d{\sigma(k)} \\
        &= \nu_\Gamma \cdot \mu(W),
    \end{align*}
    a contradiction.
\end{proof}

To descend from a.e.\ lattices in $X$ to a.e.\ element in $\mathbb{S}^n$ we invoke the Iwasawa decomposition of $G$.
With $V = K$ in Proposition~\ref{propfubinitype} let $S' := \{(u_{\bm{y}}, g_s) \in U \times I : \sigma(K_{\bm{y}, s}) = \sigma(K) = 1\}$, then $S'$ is full measure in $U \times I$.
For $\bm{x} \in \mathscr{L}$ denote $(x_1, \ldots, x_{n})$ by $\widetilde{\bm{x}}$.
\newline

Let ${\{\varepsilon_{\ell}\}}_{\ell \in \mathbb{Z}_+} \to 0$ be a sequence of positive reals.
For each $\ell$ we are going to choose $(u_{\bm{y}_\ell}, g_{s_\ell}) \in S'$ satisfying the following conditions:
\begin{enumerate}[label= (\roman*),font=\normalfont,before=\normalfont, leftmargin=2em]
    \item $\bm{y}_\ell \to \bm{0}$ and $s_\ell \downarrow 0$ as $\ell \to \infty$.
        For each $\ell$ there exists $K_\ell \subseteq K$ such that $\sigma(K_\ell) = 1$ and for all $k \in K_\ell$ the lattice $u_{\bm{y}_\ell}g_{s_\ell} \Delta_k$ is Birkhoff generic with respect to $\widehat{f}_{r, c-\varepsilon_\ell}$ and $\widehat{f}_{r, c+\varepsilon_\ell}$.

        Let $K_\infty = \bigcap K_\ell$, then $\sigma(K_\infty) = 1$.
        Fix $k \in K_\infty$, then $\Delta^{(\ell)} := u_{\bm{y}_\ell}g_{s_\ell}\Delta_k$ is Birkhoff generic with respect to $\widehat{f}_{r, c-\varepsilon_\ell}$ and $\widehat{f}_{r, c+\varepsilon_\ell}$ for all $\ell \in \mathbb{N}$.

        Now we are going to choose the `speed' at which $\bm{y}_\ell \to \bm{0}$ and $s_\ell \downarrow 0$.
    \item Let $F_c := \{\bm{x} \in \mathscr{L} : x_{n+2}^2 - x_{n+1}^2 < c^2, 1 \leq x_{n+2} + x_{n+1}\}$.
        Then $F_c \cap \Delta_k$ and $u_{\bm{y}_\ell}g_{s_\ell}F_c \cap \Delta^{(\ell)}$ naturally correspond to each other (under multiplication by $u_{\bm{y}_\ell} g_{s_\ell}$).
        For $\bm{x} \in g_{s_\ell}F_{c}$, $\widetilde{\bm{x}}$ is uniformly bounded and note that
        \begin{gather*}
            {(u_{\bm{y}_\ell}\bm{x})}_{n+2} - {(u_{\bm{y}_\ell}\bm{x})}_{n+1} = x_{n+2} - x_{n+1} \\
            {(u_{\bm{y}_\ell}\bm{x})}_{n+2} + {(u_{\bm{y}_\ell}\bm{x})}_{n+1} = (1 + \norm{\bm{y}_\ell}^2)x_{n+2} + (1 - \norm{\bm{y}_\ell}^2)x_{n+1} + 2\braket{\widetilde{\bm{x}}}{\bm{y}_\ell}.
        \end{gather*}
        Since $\widetilde{\bm{x}}$ is uniformly bounded on $g_{s_\ell}F_{c}$, we can choose $\bm{y}_\ell$ so close to $\bm{0}$ that
        \begin{equation}\label{equylboxxest}
            (1 - \varepsilon_\ell')(x_{n+2} + x_{n+1}) < {(u_{\bm{y}_\ell}\bm{x})}_{n+2} + {(u_{\bm{y}_\ell}\bm{x})}_{n+1} < (1 + \varepsilon_\ell')(x_{n+2} + x_{n+1}),
        \end{equation}
        where $\varepsilon_\ell' \downarrow 0$ and $s_\ell \downarrow 0$ satisfies $e^{s_\ell} < 1 + \varepsilon_\ell', (1 + \varepsilon_\ell')^2 c < c + \varepsilon_\ell, (1 + \varepsilon_\ell')^2(c - \varepsilon_\ell) < c$ and $\varepsilon_\ell' < 1/2$ for all $\ell$.
\end{enumerate}

\eqref{equylboxxest} implies that $u_{\bm{y}_\ell} g_{s_\ell}F_{c}$ can be approximated from inside by $F_{c-\varepsilon_\ell}$ and from outside by $F_{c+\varepsilon_\ell}$, possibly up to two precompact sets $\mathcal{D}_1^{(\ell)}$ and $\mathcal{D}_2^{(\ell)}$.
The set $\mathcal{D}_2^{(\ell)}$ appears as follows: For  $u_{\bm{y}_\ell}g_{s_\ell}F_{c}$ there might exist points $\bm{x} \in F_{c}$ such that $u_{\bm{y}_\ell}g_{s_\ell}\bm{x} \in P$, where $P := \{\bm{x} \in \mathscr{L} : x_{n+2} + x_{n+1} \leq 1\}$.
Let
\[
    \mathcal{D}_2^{(\ell)} := \{\bm{x} \in F_{c} : u_{\bm{y}_\ell}g_{s_\ell}\bm{x} \in P\} = g_{-s_\ell}u_{-\bm{y}_\ell}(P) \cap F_c
\]
and
\[
    \mathcal{D}_1^{(\ell)} := \{\bm{x} \in F_{c-\varepsilon_\ell} : g_{-s_\ell}u_{-\bm{y}_\ell}\bm{x} \in \mathcal{D}_2^{(\ell)} \cup P\} = u_{\bm{y}_\ell}g_{s_\ell}(\mathcal{D}_2^{(\ell)} \cup P) \cap F_{c-\varepsilon_\ell}.
\]
then~\eqref{equylboxxest} implies $\mathcal{D}_1^{(\ell)}$ and $\mathcal{D}_2^{(\ell)}$ are bounded, and we have
\[
    F_{c-\varepsilon_\ell} \smallsetminus \mathcal{D}_1^{(\ell)} \subseteq u_{\bm{y}_\ell}g_{s_\ell}(F_{c} \smallsetminus \mathcal{D}_2^{(\ell)}) \subseteq F_{c+\varepsilon_\ell}.
\]
By similar arguments and using~\eqref{equylboxxest} we get
\[
    F_{T-1, c-\varepsilon_\ell} \smallsetminus \mathcal{D}_1^{(\ell)} \subseteq u_{\bm{y}_\ell}g_{s_\ell}(F_{T, c} \smallsetminus \mathcal{D}_2^{(\ell)}) \subseteq F_{T+1, c+\varepsilon_\ell},
\]
for all $\ell$ and $T$ sufficiently large.

Therefore
\begin{equation}\label{equylboxxnoest}
    \#(F_{T-1, c-\varepsilon_\ell}(\Delta^{(\ell)})) - D_1^{(\ell)} \leq \#(F_{T, c}(\Delta_k)) - D_2^{(\ell)} \leq \#(F_{T+1, c+\varepsilon_\ell}(\Delta^{(\ell)})),
\end{equation}
where $D_1^{(\ell)} := \#(\mathcal{D}_1^{(\ell)} \cap \Delta^{(\ell)})$ and $D_2^{(\ell)} := \#(\mathcal{D}_2^{(\ell)} \cap \Delta_k)$.
Since a precompact set in $\mathscr{L}$ can only have a finite number of lattice points, it follows that $D_1^{(\ell)}, D_2^{(\ell)} < \infty$.
Consequently, from~\eqref{equylboxxnoest} we get
\[
    \lim_{T \to \infty} \frac{\#(F_{T-1, c-\varepsilon_\ell}(\Delta^{(\ell)}))}{T-1} \leq \lim_{T \to \infty} \frac{\#(F_{T, c}(\Delta_k))}{T} \leq \lim_{T \to \infty} \frac{\#(F_{T+1, c+\varepsilon_\ell}(\Delta^{(\ell)}))}{T+1}.
\]
Since $\Delta^{(\ell)}$ is Birkhoff generic with respect to $\widehat{f}_{r, c - \varepsilon_\ell}$ and $\widehat{f}_{r, c+\varepsilon_\ell}$ for all $\ell$, using~\eqref{eqschforFandaelat} we see that
\[
    \frac{1}{r}|F_{r, c-\varepsilon_\ell}| \leq \lim_{T \to \infty} \frac{\#(F_{T, c}(\Delta_k))}{T} \leq \frac{1}{r}|F_{r, c+\varepsilon_\ell}|.
\]
Since $c \mapsto |F_{r, c}|$ is a continuous function, letting $\ell \to \infty$ we see that $\forall k \in K_\infty$
\[
    \lim_{T \to \infty}\frac{\#(F_{T, c}(\Delta_k))}{T} = \frac{1}{r} |F_{r, c}|,
\]
Thus using~\eqref{eqFTcAvol} we have for $\sigma$-a.e.\ $k \in K$
\begin{equation}\label{eqFTcdeltaksim}
    \#(F_{T, c}(\Delta_k)) \thicksim |F_{T, c}| \ \text{as} \ T \to \infty.
\end{equation}
Now using Corollary~\ref{corETcasympforlat} we get for $\sigma$-a.e.\ $k \in K$
\[
    \#(E_{T, c}(\Delta_k)) \thicksim |E_{T, c}| \ \text{as} \ T \to \infty.
\]
This completes proof of Theorem~\ref{thmschforKlat}.

Since for $\alpha \in \mathbb{S}^n$, $N_{T, c}(\alpha; \Delta) = \#(E_{T, c}(\Delta_k))$ with $k\alpha = \bm{u}_{n+1}$, and full measure subsets of $K$ naturally correspond to full measure subsets of $\mathbb{S}^n$, we have Theorem~\ref{thmschforsn}.

\section{Proof of Theorems~\ref{thmspiralingforsn} and~\ref{thmspiralingwithFE}}\label{}
Since $A$ has boundary measure zero, $F_{r, c, A}$ also has boundary measure zero, and hence $f_{r, c, A}$ is Riemann integrable on $\mathscr{L}$.
As equation \eqref{eqFTcdeltaksim} holds for a.e.\ $k \in K$, using \eqref{eqsandwichFfhat1} we see that it is equivalent to
\begin{equation}\label{eqfrckequi}
    \frac{1}{T} \int_{0}^{T} \widehat{f}_{r, c} (g_t\Delta_k) \d{t} \xrightarrow{\text{as} \ T \to \infty} \int_{X} \widehat{f}_{r, c} \d{\mu} \quad \text{for a.e.} \ k \in K.
\end{equation}
Now we will prove a statement similar to~\eqref{eqfrckequi} for $f_{r, c, A}$ for a.e.\ $k \in K$.

\begin{lemma}\label{lemfrcAhataecont}
    $\widehat{f}_{r, c, A}$ is continuous a.e.\ on $X$.
\end{lemma}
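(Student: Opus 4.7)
The plan is to show that for $\mu$-a.e.\ $\Lambda \in X$, no nonzero vector of $\Lambda$ lies on the discontinuity set of $f_{r,c,A}$, and that at such a $\Lambda$ the Siegel transform is continuous thanks to the compact support of $f_{r,c,A}$. This follows the standard template for showing Siegel transforms of Riemann integrable functions are a.e.\ continuous.

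First I would identify the discontinuity set $B \subseteq \mathscr{L}$ of $f_{r,c,A}$ with (a subset of) the topological boundary of $F_{r,c,A}$ in $\mathscr{L}$. This boundary is contained in the union of:
\[
    \{x \in \mathscr{L} : x_{n+2}^2 - x_{n+1}^2 = c^2\} \cup \{x \in \mathscr{L} : x_{n+2}+x_{n+1} \in \{1,e^r\}\} \cup \pi^{-1}(\partial A) \cup \{x \in \mathscr{L} : |x_{n+1}|=x_{n+2}\}.
\]
The first two pieces are smooth hypersurfaces in $\mathscr{L}$ so have $\lambda$-measure zero; the last piece is the set where $\pi$ is undefined, also $\lambda$-null; and $\pi^{-1}(\partial A)$ is $\lambda$-null because $A \subseteq S^{n-1}$ has boundary measure zero and $\pi$ is (up to the $\lambda$-null exceptional set) a smooth submersion onto $S^{n-1}$. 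Hence $\lambda(B)=0$.

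Next, for each $\varepsilon>0$ choose an open neighborhood $B_\varepsilon \supseteq B$ in $\mathscr{L}$ that is bounded (using the compact support of $f_{r,c,A}$) and satisfies $\lambda(B_\varepsilon)<\varepsilon$. Siegel's integral formula (Theorem~\ref{thmsmvt}) applied to $\mathbf{1}_{B_\varepsilon}$ gives
\[
    \int_X \widehat{\mathbf{1}}_{B_\varepsilon}\, d\mu = \lambda(B_\varepsilon) < \varepsilon,
\]
so by Markov's inequality $\mu(\{\Lambda : \Lambda \cap B_\varepsilon \neq \emptyset\}) \leq \varepsilon$. Letting $\varepsilon \downarrow 0$ along a sequence shows $\mu(\{\Lambda : \Lambda \cap B \neq \emptyset\})=0$.

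Finally, I would fix any $\Lambda$ with $\Lambda \cap B = \emptyset$ and verify continuity of $\widehat{f}_{r,c,A}$ at $\Lambda$. Take $\Lambda_m \to \Lambda$ in $X$, and write $\Lambda_m = g_m \Lambda$ with $g_m \to e$ in $G$. Since $\supp f_{r,c,A}$ is compact and $\Lambda$ is discrete, there is a bounded set $K_0 \subseteq \mathscr{L}$ and an integer $m_0$ such that for $m \geq m_0$ every $v \in \Lambda \smallsetminus \{0\}$ with $g_m v \in \supp f_{r,c,A}$ lies in the finite set $\Lambda \cap K_0$. For each such $v$, $v \notin B$, so $f_{r,c,A}$ is continuous at $v$ and $f_{r,c,A}(g_m v) \to f_{r,c,A}(v)$; all other terms of the sum vanish for $m \geq m_0$. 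Consequently $\widehat{f}_{r,c,A}(\Lambda_m) \to \widehat{f}_{r,c,A}(\Lambda)$, giving continuity at $\Lambda$ and completing the proof.

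The main obstacle in this plan is the first step: verifying carefully that $\pi^{-1}(\partial A)$ is $\lambda$-null on the relevant compact part of $\mathscr{L}$. Once that and the smoothness of the other defining walls is in hand, the Siegel-formula argument handles everything cleanly.
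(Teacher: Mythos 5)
Your proposal is correct, and its overall skeleton matches the paper's proof: (i) the discontinuity set $S$ of $f_{r,c,A}$ has $\lambda$-measure zero, (ii) the discontinuity set of the Siegel transform is contained in $S'' := \{\Lambda : \Lambda \cap S \neq \varnothing\}$, and (iii) $\mu(S'') = 0$. Where you diverge from the paper is in step (iii). The paper argues directly through the group action: writing a general lattice as $g\Lambda_0$, it observes that $S''$ corresponds to the countable union $\bigcup_{\bm{v} \in \Lambda_0 \smallsetminus\{\bm{0}\}} \{g \in G : g\bm{v} \in S\}$, and each such slice is Haar-null in $G$ because the orbit map $g \mapsto g\bm{v}$ pushes Haar measure to something absolutely continuous with respect to $\lambda$ and $\lambda(S) = 0$. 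You instead sandwich $S$ by small bounded open neighborhoods $B_\varepsilon$, apply Siegel's integral formula to $\mathbf{1}_{B_\varepsilon}$, and use Markov's inequality, which avoids parametrizing over $\Lambda_0$ at the cost of invoking Theorem~\ref{thmsmvt}. Both are standard and equally rigorous; the paper's version is more self-contained while yours reuses the already-available Siegel formula. You also fill in two details the paper leaves implicit: the explicit decomposition of $\partial F_{r,c,A}$ into Lebesgue-null pieces (the paper simply cites that $A$ has boundary measure zero and deduces $F_{r,c,A}$ does as well), and the verification that $\Lambda \cap S = \varnothing$ forces continuity of $\widehat{f}_{r,c,A}$ at $\Lambda$ via a local lift $\Lambda_m = g_m\Lambda$ with $g_m \to e$. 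These additions are correct and genuinely strengthen the write-up, since the containment $S' \subseteq S''$ is precisely the point that deserves a sentence of justification.
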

\begin{proof}
    Let $S$ be the set of discontinuities of $f_{r, c, A}$ in $\mathscr{L}$.
    Then $|S| = 0$ and it follows that the set $S'$ of discontinuities of $\widehat{f}_{r, c, A}$ is contained in $S'' := \{\Lambda : \Lambda \cap S \neq \varnothing\}$.
    For each $\bm{v} \in \Lambda_0$, the set of $g \in G$ such that $g\bm{v} \in S$ has Haar measure zero in $G$, and hence $S''$ is measure zero.
    Therefore $\mu(S') = 0$.
\end{proof}

Using the fact that $N$ is the contracting horospherical subgroup associated to $\{g_t\}$ one can show that:
\begin{theorem}\label{thmccgen}
    Fix $\Lambda \in X$.
    Then for almost every $k \in K$ and for all $\varphi \in C_c(X)$
    \[
        \frac{1}{T} \int_{0}^{T} \varphi(g_tk\Lambda) \d{t} \to \int_{X} \varphi \d{\mu}.
    \]
\end{theorem}
\begin{proof}

    Since $\mu$ is ergodic under the $g_t$-action, one has that for almost every $\Delta \in X$ and for all $\varphi \in C_c(X)$
    \[
        \frac{1}{T} \int_{0}^{T} \varphi(g_t\Delta) \d{t} \to \int_{X} \varphi \d{\mu}.
    \]
    The above can be shown by compactifying $X$ and using, e.g., \cite[Lemma 6.3]{einsiedlerward11}.
    Any $\varphi \in C_c(X)$ is `uniformly continuous', i.e., for given $\varepsilon > 0$ there exists an open neighborhood $U$ of identity in $G$ such that for all $h \in U$ and $\Delta \in X$
    \begin{equation}\label{eqvphunif}
        |\varphi(h\Delta) - \varphi(\Delta)| < \varepsilon.
    \end{equation}
    Then since $g_tu_{\bm{y}}g_{-t} \to 1_G$ as $t \to \infty$, for fixed $\Lambda \in X$ we have
    \begin{align*}
        \lim_{T \to \infty} \frac{1}{T}\int_{0}^{T} \varphi(g_tu_{\bm{y}}g_sk\Lambda) \d{t} &= \lim_{T \to \infty} \frac{1}{T}\int_{0}^{T} \varphi(g_tu_{\bm{y}}g_{-t}g_{t+s}k\Lambda) \d{t}, \\
                                                                                            &= \lim_{T \to \infty} \frac{1}{T} \int_{0}^{T} \varphi(g_{t+s}k\Lambda) \d{t}, \quad \text{using \eqref{eqvphunif}}\\
                                                                                            &= \lim_{T \to \infty} \frac{1}{T} \int_{0}^{T} \varphi(g_{t}k\Lambda) \d{t}.
    \end{align*}
    
    Hence, arguments similar to Proposition \ref{propfubinitype} finish the proof.
\end{proof}
\begin{remark}\label{}
    The above theorem seems to be well known to experts in the area, but we could not find a proper reference for it, that's why we included a short proof.
\end{remark}

We prove some general properties of convergence of measures as in~\cite{kleinbockshiweiss17} (Lemma 5.2 to Corollary 5.4).

\begin{lemma}\label{lemconvmeasi}
    Let $\{\mu_i\}$ be a sequence of probability measures on $X$ such that $\mu_i \to \mu$ with respect to the weak-$\ast$ topology.
    Then for any non-negative $\varphi \in C_c(X)$ we have
    \[
        \lim_{i \to \infty} \int_{X} \varphi \cdot \widehat{f}_{r, c, A} \, \d{\mu_i} = \int_{X} \varphi \cdot \widehat{f}_{r, c, A} \, \d{\mu}.
    \]
\end{lemma}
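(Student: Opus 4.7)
The plan is to sandwich the discontinuous Siegel transform $\widehat{f}_{r,c,A}$ between Siegel transforms of continuous compactly supported functions on $\mathscr{L}$, so that weak-$\ast$ convergence can be applied directly to test functions in $C_c(X)$. Fix $\varepsilon > 0$. Since $A \subseteq S^{n-1}$ has boundary measure zero, the bounded set $F_{r,c,A} \subseteq \mathscr{L}$ has $\lambda$-negligible boundary, so by Urysohn's lemma together with inner/outer regularity of $\lambda$ there exist continuous compactly supported functions $f^{-}_{\varepsilon}, f^{+}_{\varepsilon}$ on $\mathscr{L}$ with $0 \leq f^{-}_{\varepsilon} \leq f_{r,c,A} \leq f^{+}_{\varepsilon}$ and
\[
    \int_{\mathscr{L}} (f^{+}_{\varepsilon} - f^{-}_{\varepsilon}) \, \d{\lambda} < \varepsilon.
\]
Because $f^{\pm}_{\varepsilon}$ has compact support in $\mathscr{L}$, at each $\Lambda \in X$ the sum defining $\widehat{f^{\pm}_{\varepsilon}}(\Lambda)$ involves only finitely many lattice vectors, and these vectors deform continuously under small perturbations of $\Lambda$; hence $\widehat{f^{\pm}_{\varepsilon}}$ is continuous on $X$. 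In particular $\varphi \cdot \widehat{f^{\pm}_{\varepsilon}} \in C_c(X)$.

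Applying weak-$\ast$ convergence $\mu_i \to \mu$ to these test functions yields
\[
    \lim_{i \to \infty} \int_X \varphi \cdot \widehat{f^{\pm}_{\varepsilon}} \, \d{\mu_i} = \int_X \varphi \cdot \widehat{f^{\pm}_{\varepsilon}} \, \d{\mu}.
\]
Since $\varphi \geq 0$ and $\widehat{f^{-}_{\varepsilon}} \leq \widehat{f}_{r,c,A} \leq \widehat{f^{+}_{\varepsilon}}$ pointwise, I sandwich
\[
    \int_X \varphi \cdot \widehat{f^{-}_{\varepsilon}} \, \d{\mu_i} \leq \int_X \varphi \cdot \widehat{f}_{r,c,A} \, \d{\mu_i} \leq \int_X \varphi \cdot \widehat{f^{+}_{\varepsilon}} \, \d{\mu_i}.
\]
Theorem~\ref{thmsmvt} gives
\[
    0 \leq \int_X \varphi \cdot (\widehat{f^{+}_{\varepsilon}} - \widehat{f^{-}_{\varepsilon}}) \, \d{\mu} \leq \norm{\varphi}_{\infty} \int_{\mathscr{L}} (f^{+}_{\varepsilon} - f^{-}_{\varepsilon}) \, \d{\lambda} < \norm{\varphi}_{\infty} \varepsilon,
\]
and the same bound applies to each of the one-sided differences $\int_X \varphi(\widehat{f}_{r,c,A} - \widehat{f^{-}_{\varepsilon}}) \, \d{\mu}$ and $\int_X \varphi(\widehat{f^{+}_{\varepsilon}} - \widehat{f}_{r,c,A}) \, \d{\mu}$. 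Taking $\liminf_i$ and $\limsup_i$ in the sandwich and then sending $\varepsilon \to 0$, both bounds collapse to $\int_X \varphi \cdot \widehat{f}_{r,c,A} \, \d{\mu}$, completing the proof.

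The main obstacle is the continuity of $\widehat{f^{\pm}_{\varepsilon}}$ on all of $X$: although standard, it rests on the geometry of the space of lattices and uses crucially that $f^{\pm}_{\varepsilon}$ are continuous with compact support in $\mathscr{L}$, not merely in $L^1(\lambda)$. Everything else is a soft sandwich argument combined with Siegel's integral formula, which automatically upgrades the $L^1(\mathscr{L},\lambda)$-closeness of the approximants to $L^1(X,\mu)$-closeness of their Siegel transforms.
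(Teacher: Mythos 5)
Your proof is correct but takes a genuinely different route from the paper's. The paper approximates the discontinuous product $\varphi \cdot \widehat{f}_{r,c,A}$ directly on $X$ by functions in $C_c(X)$: it first establishes that this product is Riemann integrable on $X$ (bounded, via a continuous dominant $\widehat h$; compactly supported, since $\varphi$ is; continuous $\mu$-a.e., via Lemma~\ref{lemfrcAhataecont}), then invokes Lebesgue's criterion together with a partition of unity to produce $h_1 \leq \varphi \cdot \widehat{f}_{r,c,A} \leq h_2$ in $C_c(X)$ with small gap. You instead approximate the indicator $f_{r,c,A}$ upstairs on $\mathscr{L}$ by $f^{\pm}_\varepsilon \in C_c(\mathscr{L})$, take Siegel transforms --- which are continuous on $X$ precisely because $f^{\pm}_\varepsilon$ are continuous with compact support, a fact the paper itself uses implicitly when asserting $\widehat h$ is continuous --- and then let the Siegel formula (Theorem~\ref{thmsmvt}) convert the $L^1(\mathscr{L},\lambda)$-closeness of $f^{\pm}_\varepsilon$ into $L^1(X,\mu)$-closeness of their transforms. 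Your route has the advantage of completely bypassing Lemma~\ref{lemfrcAhataecont} and the Riemann-sum/partition-of-unity machinery, at the cost of relying on regularity of $\lambda$ and the continuity of the Siegel transform on $C_c(\mathscr{L})$; the paper's route mirrors the precedent in~\cite{kleinbockshiweiss17} and~\cite{alamghosh20} that it explicitly follows. Both arguments are sound.
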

\begin{proof}
    Let $h$ be a compactly supported continuous function on $\mathscr{L}$ so that $f_{r, c, A} \leq h$.
    Therefore $\widehat{f}_{r, c, A} \leq \widehat{h}$, and hence $\widehat{f}_{r, c, A}$ is bounded on compact sets of $X$ since $\widehat{h}$ is continuous on $X$.
    Thus we see that $\varphi \cdot \widehat{f}_{r, c, A}$ is bounded, compactly supported and continuous except on a set of measure zero.
    By using a partition of unity, without loss of generality one can assume that $\varphi$ is supported on a coordinate chart.
    Applying Lebesgue's criterion for Riemann integrability to $\varphi \cdot \widehat{f}_{r, c, A}$, we can write $\int_{X} \varphi \cdot \widehat{f}_{r, c, A} \, \d{\mu}$ as the limit of upper and lower Riemann sums.
    It follows that given $\varepsilon > 0$ there exist $h_1, h_2 \in C_c(X)$ such that $h_1 \leq \varphi \cdot \widehat{f}_{r, c, A} \leq h_2$ and
    \begin{equation}\label{eqlemconvmeasi}
        \int_{X} (h_2 - h_1) \, \d{\mu} \leq \varepsilon.
    \end{equation}
    Thus we have
    \begin{gather}
        \int_{X} h_1 \, \d{\mu} \leq \liminf_{i \to \infty} \int_{X} \varphi \cdot \widehat{f}_{r, c, A} \, \d{\mu_i} \leq \limsup_{i \to \infty} \int_{X} \varphi \cdot \widehat{f}_{r, c, A} \, \d{\mu_i} \leq \int_{X} h_2 \, \d{\mu} \label{eqlemconvmeasii} \\
        \int_{X} h_1 \, \d{\mu} \leq \int_{X} \varphi \cdot \widehat{f}_{r, c, A} \, \d{\mu} \leq \int_{X} h_2 \, \d{\mu}. \label{eqlemconvmeasiii}
    \end{gather}
    Since $\varepsilon$ was arbitrary, the lemma follows from~\eqref{eqlemconvmeasi} --~\eqref{eqlemconvmeasiii}.
\end{proof}

\begin{corollary}\label{corconvmeasi}
    Let the notation be as in Lemma~\ref{lemconvmeasi}.
    Assume that
    \begin{equation}\label{eqcorconvmeasi}
        \lim_{i \to \infty} \int_{X} \widehat{f}_{r, c} \, \d{\mu_i} = \int_{X} \widehat{f}_{r, c} \, \d{\mu}.
    \end{equation}
    Then for any $\varepsilon > 0$ there exists $i_0 > 0$ and $\varphi \in C_c(X)$ with $0 \leq \varphi \leq 1$ such that
    \begin{equation}\label{eqcorconvmeasii}
        \int_{X} (1 - \varphi) \widehat{f}_{r, c} \, \d{\mu_i} < \varepsilon
    \end{equation}
    for any $i \geq i_0$.
\end{corollary}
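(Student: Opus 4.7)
The plan is to combine the hypothesis~\eqref{eqcorconvmeasi} with Lemma~\ref{lemconvmeasi} after choosing a cutoff $\varphi$ that captures almost all the mass of $\widehat{f}_{r,c}$ under $\mu$. Since $\widehat{f}_{r,c} \in L^1(X, \mu)$ by Theorem~\ref{thmsmvt}, and $X$ is $\sigma$-compact, I would fix an exhaustion $K_1 \subseteq K_2 \subseteq \cdots$ of $X$ by compact sets with $\bigcup_j K_j = X$. Applying monotone convergence to $\widehat{f}_{r,c}\,\mathbbm{1}_{X \smallsetminus K_j}$, for any given $\varepsilon > 0$ I can choose $j$ large enough that
\[
    \int_X \widehat{f}_{r,c} \cdot \mathbbm{1}_{X \smallsetminus K_j} \, d\mu < \varepsilon/2.
\]
Then Urysohn's lemma produces $\varphi \in C_c(X)$ with $0 \leq \varphi \leq 1$ and $\varphi \equiv 1$ on $K_j$, so that $1 - \varphi \leq \mathbbm{1}_{X \smallsetminus K_j}$ and hence $\int_X (1-\varphi)\widehat{f}_{r,c}\, d\mu < \varepsilon/2$.

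Next I would apply Lemma~\ref{lemconvmeasi} to this non-negative $\varphi \in C_c(X)$ to obtain
\[
    \lim_{i \to \infty} \int_X \varphi \cdot \widehat{f}_{r,c} \, d\mu_i = \int_X \varphi \cdot \widehat{f}_{r,c}\, d\mu.
\]
Subtracting this from the hypothesis~\eqref{eqcorconvmeasi} gives
\[
    \lim_{i \to \infty} \int_X (1-\varphi) \widehat{f}_{r,c}\, d\mu_i = \int_X (1-\varphi) \widehat{f}_{r,c}\, d\mu < \varepsilon/2.
\]
Choosing $i_0$ so large that for all $i \geq i_0$ the left-hand integrals lie within $\varepsilon/2$ of their limit yields~\eqref{eqcorconvmeasii}.

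There is no genuine obstacle: the argument is really just upgrading weak-$\ast$ convergence of the $\mu_i$ from $C_c(X)$-test functions to the unbounded Siegel transform $\widehat{f}_{r,c}$, using an approximation by compactly supported pieces. Lemma~\ref{lemconvmeasi} handles the bounded-compact piece, and the hypothesis~\eqref{eqcorconvmeasi} is precisely what prevents mass of $\widehat{f}_{r,c}\, d\mu_i$ from escaping to infinity, forcing the tail to be uniformly small in $i$.
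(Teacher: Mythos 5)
Your proof is correct and follows essentially the same strategy as the paper: split $\widehat{f}_{r,c}$ via a cutoff $\varphi \in C_c(X)$ with small $\mu$-tail, apply Lemma~\ref{lemconvmeasi} to the compactly supported piece $\varphi\cdot\widehat{f}_{r,c}$, and use the hypothesis~\eqref{eqcorconvmeasi} to control the full integral against $\mu_i$, combining the pieces by a simple $\varepsilon$-split. The only differences are cosmetic: you spell out the exhaustion-plus-Urysohn construction of $\varphi$ which the paper leaves implicit, and you split $\varepsilon$ into halves rather than thirds.
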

\begin{proof}
    Since $\widehat{f}_{r, c} \in L^1(X, \mu)$, there exists a compactly supported continuous function $\varphi : X \to [0, 1]$ such that
    \begin{equation}\label{eqcorconvmeasiii}
        \int_{X} (1 - \varphi) \widehat{f}_{r, c} \, \d{\mu} < \frac{\varepsilon}{3}.
    \end{equation}
    By Lemma~\ref{lemconvmeasi} and~\eqref{eqcorconvmeasi}, there exists $i_0 > 0$ such that for $i \geq i_0$
    \begin{gather}
        {\left|\int_{X} \varphi \cdot \widehat{f}_{r, c} \, \d{\mu_i} - \int_{X} \varphi \cdot \widehat{f}_{r, c} \, \d{\mu}\right|} < \frac{\varepsilon}{3} \label{eqcorconvmeasiv} \\
        {\left|\int_{X} \widehat{f}_{r, c} \, \d{\mu_i} - \int_{X} \widehat{f}_{r, c} \, \d{\mu}\right|} < \frac{\varepsilon}{3}. \label{eqcorconvmeasv}
    \end{gather}
    Therefore the corollary follows from~\eqref{eqcorconvmeasiii} --~\eqref{eqcorconvmeasv}.
\end{proof}

\begin{corollary}\label{corconvmeasii}
    Let the notation be as in Lemma~\ref{lemconvmeasi}.
    Assume that
    \begin{equation}\label{eqcorconvmeasvi}
        \lim_{i \to \infty} \int_{X} \widehat{f}_{r, c} \, \d{\mu_i} = \int_{X} \widehat{f}_{r, c} \, \d{\mu}.
    \end{equation}
    Then
    \[
        \lim_{i \to \infty} \int_{X} \widehat{f}_{r, c, A} \, \d{\mu_i} = \int_{X} \widehat{f}_{r, c, A} \, \d{\mu}.
    \]
\end{corollary}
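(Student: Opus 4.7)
The plan is to deduce this from Lemma~\ref{lemconvmeasi} and Corollary~\ref{corconvmeasi} by a standard cutoff argument, exploiting the pointwise bound $0 \le \widehat{f}_{r,c,A} \le \widehat{f}_{r,c}$ (which follows immediately from $F_{r,c,A} \subseteq F_{r,c}$ in the definition of the Siegel transform).

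Given $\varepsilon > 0$, I would first apply Corollary~\ref{corconvmeasi} to obtain $i_0 > 0$ and $\varphi \in C_c(X)$ with $0 \le \varphi \le 1$ such that
\[
    \int_X (1-\varphi)\widehat{f}_{r,c} \, d\mu_i < \varepsilon \qquad \text{for all } i \ge i_0,
\]
and, by possibly enlarging $\varphi$ or combining with~\eqref{eqcorconvmeasiii}, also $\int_X (1-\varphi)\widehat{f}_{r,c} \, d\mu < \varepsilon$. Then decompose
\[
    \int_X \widehat{f}_{r,c,A} \, d\mu_i = \int_X \varphi\cdot\widehat{f}_{r,c,A} \, d\mu_i + \int_X (1-\varphi)\widehat{f}_{r,c,A} \, d\mu_i,
\]
and similarly for $\mu$. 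The tail terms are controlled by the bound $\widehat{f}_{r,c,A} \le \widehat{f}_{r,c}$, which gives
\[
    \int_X (1-\varphi)\widehat{f}_{r,c,A} \, d\mu_i \le \int_X (1-\varphi)\widehat{f}_{r,c} \, d\mu_i < \varepsilon
\]
for $i \ge i_0$, and the same estimate for $\mu$.

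For the main term, Lemma~\ref{lemconvmeasi} applied to the non-negative test function $\varphi$ yields
\[
    \lim_{i \to \infty} \int_X \varphi\cdot\widehat{f}_{r,c,A} \, d\mu_i = \int_X \varphi\cdot\widehat{f}_{r,c,A} \, d\mu.
\]
Combining the three estimates gives $\limsup_{i \to \infty} \left|\int_X \widehat{f}_{r,c,A} \, d\mu_i - \int_X \widehat{f}_{r,c,A} \, d\mu \right| \le 2\varepsilon$, and since $\varepsilon$ was arbitrary the corollary follows. There is no real obstacle here: the only subtlety is that Lemma~\ref{lemconvmeasi} requires $\varphi \cdot \widehat{f}_{r,c,A}$ to be Riemann integrable, which is ensured by Lemma~\ref{lemfrcAhataecont} together with local boundedness of $\widehat{f}_{r,c,A}$ (since it is dominated by the continuous function $\widehat{h}$ from the proof of that lemma), so that the hypothesis of Lemma~\ref{lemconvmeasi} is satisfied.
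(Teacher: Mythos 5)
Your proof is correct and follows essentially the same route as the paper: invoke Corollary~\ref{corconvmeasi} to truncate the tails via $\varphi \in C_c(X)$ with $0 \le \varphi \le 1$, control those tails using the pointwise domination $0 \le \widehat{f}_{r,c,A} \le \widehat{f}_{r,c}$, and apply Lemma~\ref{lemconvmeasi} to the compactly supported main term. The paper packages the estimates as three $\varepsilon/3$-bounds to land exactly on $\varepsilon$, while you let the $\limsup$ absorb the main-term error to get $2\varepsilon$; this is an immaterial difference, and your explicit note that the $\mu$-tail bound comes from~\eqref{eqcorconvmeasiii} (i.e.\ the choice of $\varphi$ inside the proof of Corollary~\ref{corconvmeasi}) matches what the paper uses implicitly.
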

\begin{proof}
    Using Lemma~\ref{lemconvmeasi}, Corollary~\ref{corconvmeasi} and~\eqref{eqcorconvmeasvi}, we have that for $\varepsilon > 0$ there exists $i_0 > 0$ and a continuous compactly supported function $\varphi : X \to [0, 1]$ such that for $i \geq i_0$
    \begin{gather*}
        {\left|\int_{X} \varphi \cdot \widehat{f}_{r, c, A} \, \d{\mu_i} - \int_{X} \varphi \cdot \widehat{f}_{r, c, A} \, \d{\mu}\right|} < \frac{\varepsilon}{3}, \\
        \int_{X} (1 - \varphi) \widehat{f}_{r, c} \, \d{\mu_i} < \frac{\varepsilon}{3}, \\
        \int_{X}(1 - \varphi) \widehat{f}_{r, c} \, \d{\mu} < \frac{\varepsilon}{3}.
    \end{gather*}
    Using $0 \leq \widehat{f}_{r, c, A} \leq \widehat{f}_{r, c}$ and the above inequalities, we get that
    \[
        {\left|\int_{X} \widehat{f}_{r, c, A} \, \d{\mu_i} - \int_{X} \widehat{f}_{r, c, A} \, \d{\mu}\right|} < \varepsilon
    \]
    for $i > i_0$.
    Hence we are done.
\end{proof}

\vspace{1em}
Using~\eqref{eqfrckequi}, Corollary~\ref{corconvmeasii} and Theorem~\ref{thmccgen}, we see that for almost every $k \in K$
\[
    \frac{1}{T} \int_{0}^{T} \widehat{f}_{r, c, A}(g_t\Delta_k) \d{t} \xrightarrow{\text{as} \ T \to \infty} \int_{X} \widehat{f}_{r, c, A} \d{\mu} = \int_{\mathscr{L}} f_{r, c, A} \d{\lambda} = |F_{r, c, A}|.
\]
But~\eqref{eqsandwichFfhatA} says that
\[
    \lim_{T \to \infty} \frac{1}{Tr} \int_{0}^{T} \widehat{f}_{r, c, A}(g_t\Delta_k) \d{t} = \lim_{T \to \infty} \frac{\#(F_{T, c, A}(\Delta_k))}{T}.
\]
Therefore using~\eqref{eqFTcAvol} we see that for almost every $k \in K$
\begin{equation}\label{eqspiralingwithF}
    \#(F_{T, c, A}(\Delta_k)) \thicksim |F_{T, c, A}| \ \text{as} \ T \to \infty.
\end{equation}
For any $k \in K$ satisfying~\eqref{eqspiralingwithF} using~\eqref{eqETcAsandwiched} we see that
\[
    F_{T-r_0, c_\ell, A}(\Delta_k) \smallsetminus (C_\ell \cap \Delta_k) \subseteq E_{T, c, A}(\Delta_k) \smallsetminus (C_0 \cap \Delta_k) \subseteq F_{T+r_0, c, A}(\Delta_k).
\]
Similar arguments as in \S\ref{ssecasympETcvol} and in Proof of Corollary~\ref{corETcasympforlat} at the end of \S\ref{secpfsforlats}, and~\eqref{eqETcAasymp} completes the proof of Theorem~\ref{thmspiralingwithFE}.

Since full measure subsets of $K$ naturally correspond to full measure subsets of $\mathbb{S}^n$, Theorem~\ref{thmspiralingwithFE} implies Theorem~\ref{thmspiralingforsn}.


\bibliographystyle{alpha}
\bibliography{dio-on-sn-v5}

\end{document}